\documentclass[10pt,a4paper]{article}

\usepackage{tikz}
\usetikzlibrary{shapes,arrows,positioning,fit,calc}
\usepackage{amsmath,amssymb}

\usepackage[utf8]{inputenc}            
\usepackage[a4paper,margin=3cm]{geometry}

\usepackage{amsmath,amssymb,amsfonts,amsthm,mathtools}

\usepackage[algo2e,linesnumbered,ruled,vlined]{algorithm2e}

\usepackage{graphicx}
\usepackage{float}
\usepackage{subfig}
\usepackage{xcolor}                    

\usepackage{ifthen}
\usepackage{enumerate}
\usepackage{cuted}
\usepackage{subeqnarray}
\usepackage{url}
\usepackage{todonotes}

\usepackage{hyperref}

\newcommand{\R}{\mathbb{R}}
\newcommand{\N}{\mathbb{N}}

\newcommand{\inner}[2]{%
  \ifthenelse{\equal{#2}{}}%
    {\langle\cdot,\cdot\rangle_{#1}}%
    {\langle#2\rangle_{#1}}}

\newcommand{\norm}[2]{%
  \ifthenelse{\equal{#2}{}}%
    {\lVert\cdot\rVert_{#1}}%
    {\lVert#2\rVert_{#1}}}

\newcommand{\seminorm}[2]{%
  \ifthenelse{\equal{#2}{}}%
    {\lvert\cdot\rvert_{#1}}%
    {\lvert#2\rvert_{#1}}}

\newcommand{\cC}{{\cal C}}

\newcommand{\cF}{{\cal F}}

\newcommand{\cH}{{\cal H}}

\newcommand{\cK}{{\cal K}}

\newcommand{\cP}{{\cal P}}

\newcommand{\cT}{{\cal T}}

\newcommand{\cV}{{\cal V}}

\newcommand{\cX}{{\cal X}}

\newcommand{\cZ}{{\cal Z}}

\newcommand{\fS}{{\mathfrak S}}
\newcommand{\co}{\operatorname{co}}

\newcommand{\bC}{\mathbf{C}}

\DeclareMathOperator*{\argmin}{arg\,min}

\newtheorem{theorem}{Theorem}
\newtheorem{definition}{Definition}

\newtheorem{remark}{Remark}

\newcommand{\calx}{\mathcal{X}}
 
\newcommand{\RR}{\mathbb{R}}

\newcommand{\mN}{{\mathbb N}}

\newcommand{\mC}{{\mathbb C}}

\newcommand{\mR}{{\mathbb R}}

\newcommand{\mU}{{\mathbb U}}

\newcommand{\gs}[1]{{\color{red}GS: #1}}

\DeclareUnicodeCharacter{202C}{\textvisiblespace}

\graphicspath{{./Figures/}}
\usepackage{authblk} 

\title{Kernel Methods for the Construction of Certified Lyapunov Functions via Approximate Koopman Eigenfunctions}

\author[1]{P.~Giesl\thanks{p.a.giesl@sussex.ac.uk}}
\author[2]{S.~Hafstein\thanks{shafstein@hi.is}}
\author[3,4]{B.~Hamzi\thanks{boumediene.hamzi@gmail.com}}
\author[4]{J.~Lee\thanks{jlee9@caltech.edu}}
\author[4]{H.~Owhadi\thanks{owhadi@caltech.edu}}
\author[5]{G.~Santin\thanks{gabriele.santin@unive.it}}
\author[6]{U.~Vaidya\thanks{uvaidya@clemson.edu}}

\affil[1]{Department of Mathematics, University of Sussex,  UK}
\affil[2]{Faculty of Physical Sciences, University of Iceland, Iceland}
\affil[3]{Department of Computing and Mathematical Sciences, Caltech, USA}
\affil[4]{Alan Turing Institute, London, UK}
\affil[5]{Department of Environmental Sciences, Informatics and Statistics, Ca' Foscari University of Venice, Italy}
\affil[6]{Department of Mechanical Engineering, Clemson University, USA}

\date{\today}

\begin{document}

\maketitle

\begin{abstract}
We present a kernel-based methodology for constructing Lyapunov functions 
for nonlinear dynamical systems using approximate Koopman eigenfunctions. 
Our approach decomposes principal Koopman eigenfunctions into linear and 
nonlinear components, where the linear part is obtained from the system's 
linearization and the nonlinear part is computed by solving a partial 
differential equation using symmetric kernel collocation in reproducing 
kernel Hilbert spaces (RKHS). The resulting Lyapunov function is constructed 
as a quadratic form in the approximate eigenfunctions. We establish error 
bounds relating the approximation quality to the fill distance of collocation 
points and provide a certification procedure using continuous piecewise 
affine (CPA) methods. Numerical experiments on benchmark systems, including 
a polynomial system and the Duffing oscillator, demonstrate the effectiveness 
of our approach.

\end{abstract}

\section{Introduction}

Time series data permeate numerous scientific domains, spurring developments in statistical and machine learning forecasting techniques \cite{kantz97,CASDAGLI1989, yk1, yk2, yk3, yk4, survey_kf_ann,jaideep1,nielsen2019practical,abarbanel2012analysis, pillonetto2011new,wang2011predicting,brunton2016discovering,lusch2018deep,callaham2021learning,kaptanoglu2021physicsconstrained,kutz2022parsimony}. Dynamical systems theory equips us with tools to analyze the underlying dynamics governing such data.

Lyapunov functions play a pivotal role in the qualitative theory of dynamical systems, providing certificates of stability without requiring explicit solution of the underlying differential equations. They are primarily used to define basins of attraction for asymptotically stable equilibria and to facilitate control design. However, obtaining an explicit analytical expression for a Lyapunov function associated with a nonlinear differential equation is typically infeasible, motivating the development of computational methods. These include the sums of squares (SOS) approach for polynomial Lyapunov functions via semidefinite optimization \cite{prajna}, the construction of continuous piecewise affine (CPA) Lyapunov functions using linear optimization \cite{Haf2007mon}, methods based on Zubov's equation \cite{zubov2001camilli}, set-oriented approaches \cite{book2002grune}, and radial basis function methods \cite{Giesl2007}, which have also been extended to data-driven frameworks \cite{lyap_bh}.

In parallel, the foundational work by Koopman \cite{koopman1932dynamical} introduced Koopman operator theory, which provides a linear perspective on nonlinear dynamical systems by mapping the finite-dimensional state evolution to an infinite-dimensional, linear functional framework. This approach gained traction following key advancements \cite{mezic2005spectral}, highlighting the Koopman operator's spectral richness and its critical role in various analytical and synthesis challenges. Principal eigenfunctions of the Koopman operator illuminate state-space geometry and characterize stable and unstable manifolds of equilibrium points \cite{mauroy2016global,umathe2023spectral}. Studies such as \cite{matavalam2024data} have utilized these eigenfunctions to identify stability boundaries and domains of attraction. Moreover, Koopman eigenfunctions facilitate constructing solutions to optimal control problems \cite{vaidya2022spectral,vaidya2025koopman} and have led to novel computational approaches like the path-integral formula \cite{deka2023path}. Despite its utility, the Koopman operator often presents computational challenges due to its potentially continuous spectrum and the associated issue of ``spectral pollution'' when approximated by finite-dimensional matrices \cite{colbrook2020foundations,colbrook2024limits}.

Capitalizing on the theory of reproducing kernel Hilbert spaces (RKHS) \cite{CuckerandSmale}, kernel-based methods enhance regularization, convergence, and interpretability while solidifying the mathematical foundation for analyzing dynamical systems \cite{chen2021solving, houman_cgc, yk1, bhcm11, bhcm1, lyap_bh, BHPhysicaD, hamzi2019kernel, bh2020b, klus2020data, ALEXANDER2020132520, bhks, bh12, bh17, hb17, mmd_kernels_bh, bh_kfs_p5, bh_kfs_p6, hou2024propagating, cole_hopf_poincare, kernel_sos} and surrogate modeling \cite{santinhaasdonk19}. Our previous work \cite{Lee2025} addressed the spectral pollution challenge by developing a kernel-based methodology to directly construct principal eigenfunctions from data, bypassing explicit operator approximation. Unlike traditional approaches that first construct a Koopman operator and then derive its spectra, our method directly learns eigenfunctions by solving the associated partial differential equations in RKHS, following the path-integral formulation of \cite{deka2023path}. This approach decomposes principal eigenfunctions into linear and nonlinear components, integrating insights from system linearization with the approximation power of RKHSs.

This paper extends our kernel-based eigenfunction methodology to construct Lyapunov functions. The key insight, following \cite{Lee2025b}, is that Lyapunov functions can be expressed as quadratic forms in Koopman eigenfunctions. We decompose each principal eigenfunction into linear and nonlinear parts: the linear part is determined by the system's linearization at the equilibrium, while the nonlinear part satisfies a first-order PDE that we solve via symmetric kernel collocation. The resulting approximate Lyapunov function inherits rigorous error bounds (Theorem~4) that depend on the fill distance of collocation points. We further provide a certification procedure using CPA methods that verifies the computed function is a valid Lyapunov function outside the training points. The overall procedure is summarized in Figure \ref{fig:methodology_flowchart}.

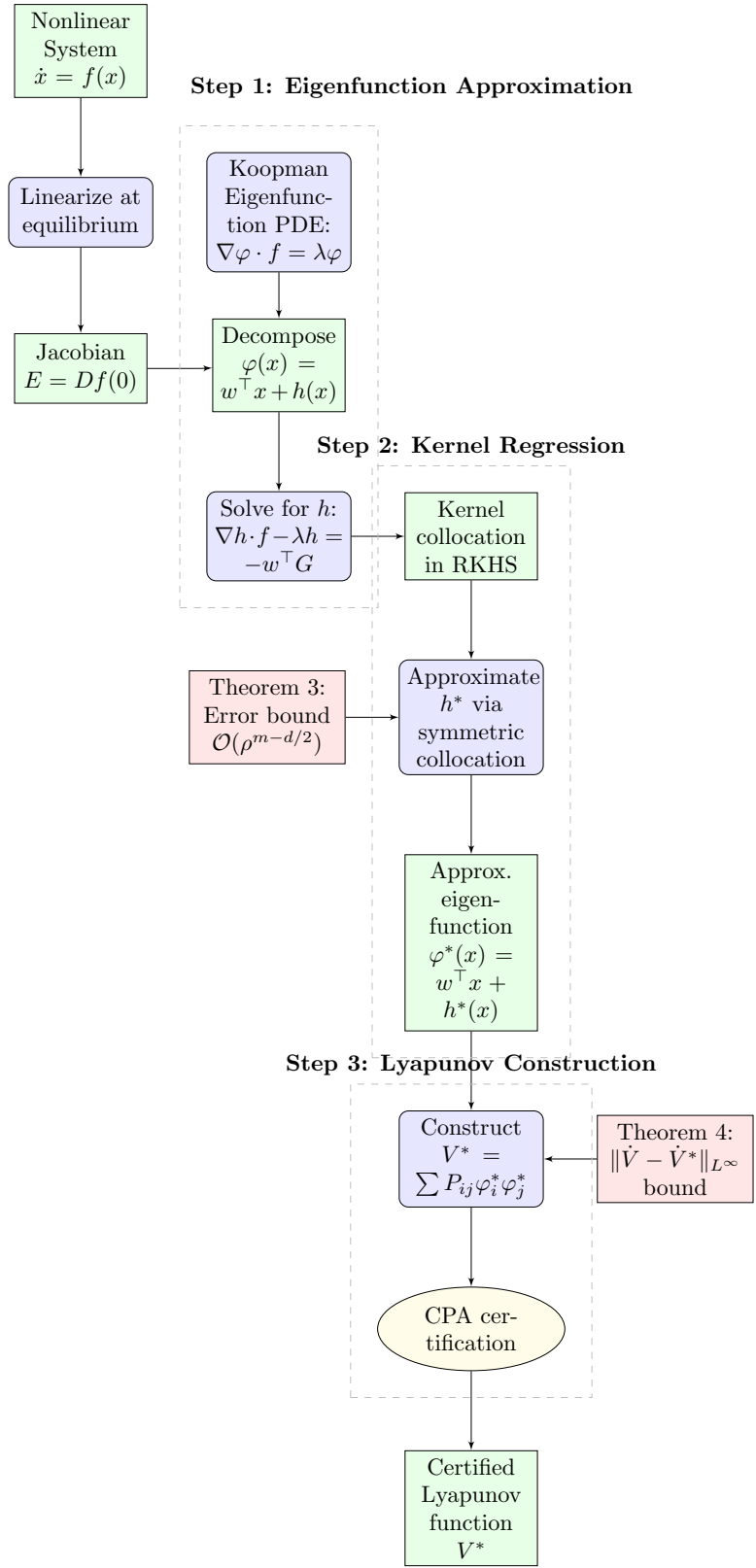
\begin{figure}[htbp]
\centering
\scalebox{0.9}{
\begin{tikzpicture}[
    node distance=1.2cm and 0.8cm,
    block/.style={rectangle, draw, fill=blue!10, text width=5.5em, text centered, rounded corners, minimum height=3em},
    data/.style={rectangle, draw, fill=green!10, text width=5em, text centered, minimum height=3em},
    op/.style={circle, draw, fill=orange!20, text width=2em, text centered, minimum height=2em},
    theorem/.style={rectangle, draw, fill=red!10, text width=6em, text centered, minimum height=3em},
    line/.style={draw, -latex'},
    cloud/.style={ellipse, draw, fill=yellow!10, text width=5em, text centered, minimum height=3em},
    phase/.style={draw=gray!50, dashed, inner sep=0.4cm} 
]

\node[data] (input) {Nonlinear System $\dot{x}=f(x)$};
\node[block, below=of input] (linearize) {Linearize at equilibrium};
\node[data, below=of linearize, yshift=-0.1cm] (E) {Jacobian $E=Df(0)$};
\node[block, right=of linearize] (koopman) {Koopman Eigenfunction PDE: $\nabla\varphi\cdot f=\lambda\varphi$};
\node[data, below=of koopman, yshift=0.5cm] (decomp) {Decompose $\varphi(x) = w^\top x + h(x)$};
\node[block, below=of decomp] (pde) {Solve for $h$: $\nabla h\cdot f - \lambda h = -w^\top G$};
\node[data, right=of pde] (kernel) {Kernel collocation in RKHS};
\node[block, below=of kernel] (approx) {Approximate $h^*$ via symmetric collocation};
\node[theorem, left=of approx] (thm3) {Theorem 3: Error bound $\mathcal{O}(\rho^{m-d/2})$};
\node[data, below=of approx] (eigen) {Approx. eigenfunction $\varphi^*(x) = w^\top x + h^*(x)$};
\node[block, below=of eigen] (lyap) {Construct $V^* = \sum P_{ij}\varphi_i^*\varphi_j^*$};
\node[theorem, right=of lyap] (thm4) {Theorem 4: $\|\dot{V}-\dot{V}^*\|_{L^\infty}$ bound};
\node[cloud, below=of lyap] (cpa) {CPA certification};
\node[data, below=of cpa] (output) {Certified Lyapunov function $V^*$};

\path[line] (input) -- (linearize);
\path[line] (linearize) -- (E);
\path[line] (E) -- ++(1.8,0) -- (decomp); 
\path[line] (koopman) -- (decomp);
\path[line] (decomp) -- (pde);
\path[line] (pde) -- (kernel);
\path[line] (kernel) -- (approx);
\path[line] (approx) -- (eigen);
\path[line] (eigen) -- (lyap);
\path[line] (thm3) -- (approx);
\path[line] (lyap) -- (cpa);
\path[line] (thm4) -- (lyap);
\path[line] (cpa) -- (output);


\node[phase, fit=(koopman)(decomp)(pde)] (phase1) {};
\node[anchor=south, yshift=0.3cm, xshift=2cm] at (phase1.north) {\textbf{Step 1: Eigenfunction Approximation}}; 

\node[draw=gray!50, dashed, inner sep=0.4cm, fit=(kernel)(approx)(eigen), label=above:\textbf{Step 2: Kernel Regression}] (phase2) {};
\node[draw=gray!50, dashed, inner sep=0.4cm, fit=(lyap)(cpa), label=above:\textbf{Step 3: Lyapunov Construction}] (phase3) {};

\end{tikzpicture}
}
 \caption{Schematic overview of the proposed kernel-based methodology for constructing Lyapunov functions using approximate Koopman eigenfunctions. The process involves three main steps: (1) eigenfunction approximation via PDE decomposition, (2) kernel regression in RKHS with error bounds (Theorem 3), and (3) Lyapunov construction and certification (Theorem 4).}
\label{fig:methodology_flowchart}
\end{figure}

The paper is organized as follows: Section~2 reviews Lyapunov functions and existing numerical approaches. Section~3 introduces the Koopman operator, its eigenfunctions, and establishes existence theory for the eigenfunction PDEs. Section~4 presents kernel methods and symmetric collocation. Section~5 details our main Lyapunov function construction and proves error bounds. Section~6 describes the CPA certification procedure. Section~7 demonstrates the method on numerical examples, and Section~8 concludes.


\section{Background on Lyapunov functions}\label{sec:dyn_sys}
We consider the $d$-dimensional dynamical system
\begin{equation}\label{odesys}
\Sigma: 
\begin{cases} 
\dot{x} = f(x), \\ 
x(0) = x_0,
\end{cases}
\end{equation}
defined on a state space $\cZ\subseteq \mR^d$, $d\in\mN$, with a vector field $f \in C^\sigma(\mathbb{R}^d, \mathbb{R}^d)$, $\sigma \geq 1$, and initial value 
$x_0\in\cZ$.
We denote by $s_t:\cZ\to\cZ$ the flow or solution map, i.e,. $s_t(x_0)=x(t)$ is the solution of~\eqref{odesys} at time $t\geq0$.

We assume that the origin $x=0$ is an hyperbolic equilibrium point for~\eqref{odesys}, i.e. $f(0) = 0$ and all eigenvalues of the Jacobian $E\coloneqq 
Df(0)\in\R^{d\times d}$ have a strictly negative real part. Recall that this implies in particular that $x=0$ is an exponentially asymptotically stable equilibrium 
(Definition 4.1 in~\cite{Khalil1996}).

Lyapunov functions are scalar-valued functions that enable the analysis of equilibrium stability in dynamical systems 
without explicitly solving the underlying equations. Named after Aleksandr Lyapunov, these functions are defined as follows.
\begin{definition}[Lyapunov function]
A function $V\in \cC^1(\mathbb{R}^d)$ is a Lyapunov function if it satisfies the following:
\begin{itemize}
    \item Positivity: \( V(x) > 0 \) for all \( x \neq 0 \), and \( V(0) = 0 \).
    \item Non-increasing behavior along trajectories of the system: \( \dot{V}(x) \leq 0 \), where 
    \begin{equation}\label{eq:decay_v}
\dot{V}(x) = \nabla V(x) \cdot f(x).    
    \end{equation}
\end{itemize}
\end{definition}
We recall that the existence of a Lyapunov function implies that the equilibrium $x=0$ is stable, and strict negativity $\dot{V}(x) < 0$ for $x\neq 0$ 
implies even asymptotic stability (Theorem 4.1 in~\cite{Khalil1996}).
Apart from studying the stability of equilibria, Lyapunov functions are used to estimate the basin of attraction $\mathcal{A} := \{x \in 
\mathbb{R}^d \mid s_t(x) \to 0 \text{ as } t \to \infty\}$, or to facilitate control design for feedback systems.

In addition to direct statements proving stability given the existence of a Lyapunov function, there are several partial converse results showing the 
existence of a Lyapunov function of suitable smoothness given certain stability properties of an equilibrium. 
We refer to Section 4.7 in~\cite{Khalil1996} for a detailed treatment, but we stress that these results are non-constructive, motivating computational methods.
Algorithms based on radial basis functions (RBFs) and partial differential equations (PDEs) are commonly used, see e.g.~\cite{lyap_bh, 
Giesl2007}. We recall their definition in the next section since they relate to our work, but we stress that we will pursue a different 
strategy based on the eigenvalues of the Koopman operator.

%

\subsection{Numerical approaches based on kernels approximation}

Giesl and Wendland~\cite{Giesl2007,Giesl2007b} proposed a numerical method to approximate Lyapunov functions using kernel methods, and specifically Radial 
Basis Functions (RBFs). 
This approach enforces~\eqref{eq:decay_v} by solving the PDE 
\begin{equation}\label{eq:giesl_pde}
L V(x) \coloneqq \nabla V(x) \cdot f(x)=-\|x\|^2,
\end{equation}
or even a with more general right-hand side $-p(x)$, where $p$ is positive and satisfying some conditions (see~\cite{Giesl2007} for more details).

An approximate solution $V^*$ of the PDE~\eqref{eq:giesl_pde} is found by symmetric kernel collocation (see Chapter 16 in~\cite{Wendland2005}, and 
Section~\ref{sec:sym_coll} below), typically using
a Wendland RBF kernel due to its compact support and smoothness properties~\cite{Wendland1995a,Wendland1998}.
Under certain smoothness conditions on $f(x)$ and on the kernel, which are completely analogous to the assumptions of our Theorem~\ref{th:error} below, the 
approximation error is bounded in~\cite{Giesl2007} as
\begin{equation*}
|\dot V(x) - \dot V^*(x)| \leq C \rho_{Z,\Omega}^\theta, \quad \forall x \in \Omega,
\end{equation*}
where 
\begin{equation*}
    \rho_{Z,\Omega}:=\sup_{x\in\Omega}\min_{z\in Z}\|x-z\|_2
\end{equation*}
is the fill distance of the finite set $Z\subset\Omega$ of collocation points, $\theta$ depends on the smoothness assumptions, and $\Omega$ is the bounded domain containing the origin where the Lyapunov function is constructed.

This approach has been extended to the case where $f(x)$ is unknown~\cite{lyap_bh}, where Giesl, Hamzi, Rasmussen, and Webster developed a framework for 
approximating Lyapunov functions from noisy observations of the system dynamics. The key idea is to use measured trajectories \(x(t)\) to approximate $f$ from 
data using kernels and then apply the algorithim of~\cite{Giesl2007}. This approach keeps the same advantages as the original approach, including 
error estimates.

\section{The Koopman operator and its spectrum} \label{sect:koopman}

In this section we provide a brief overview of the spectral theory of the Koopman operator,
and refer the reader to \cite{mezic2020spectrum,mezic2021koopman} for more details. 

Let $\cF\subseteq \cC^0(\R^d)$ be a function space of \emph{observables} $\psi: \cZ\to \mC$ for the system~\eqref{odesys}.
We have the following definitions for the Koopman operator and its spectrum (see also~\cite{Lasota}).
\begin{definition}[Koopman Operator] The family of Koopman  operators $\mathbb{U}_t:\cF\to \cF$ corresponding to~\eqref{odesys} is defined as 
\begin{eqnarray}\label{koopman_operator}
[\mathbb{U}_t \psi](x)=\psi(s_t(x)). 
\end{eqnarray}
If in addition $\psi$ is continuously differentiable, then $\varphi(x,t):=[\mU_t \psi ](x)$ satisfies the partial differential equation 
\begin{align}
\partial_t \varphi(x, t)=\nabla_x \varphi(x,t) \cdot f(x)\eqqcolon \cK_f \varphi(x, t), \label{Koopmanpde}
\end{align}
with the initial condition $\varphi(x,0)=\psi(x)$. The operator $\cK_f$ is the infinitesimal generator of $\mU_t$, i.e.,
\begin{eqnarray}
{\cal K}_{f} \psi=\lim_{t\to 0}\frac{(\mathbb{U}_t-I)\psi}{t}. \label{K_generator}
\end{eqnarray}
\end{definition}
It easily follows from~\eqref{koopman_operator} that each $\mU_t$ is a linear operator on the space of functions $\cF$.  

\begin{definition}[Eigenvalues and Eigenfunctions of the Koopman operator]\label{definition_koopmanspectrum}
A function $\varphi_\lambda\in\cF$, assumed to be at least $\cC^1(\cZ)$,  
is said to be an eigenfunction of the Koopman operator associated with eigenvalue $\lambda\in\mC$ if
\begin{eqnarray}\label{eig_koopman}
[\mU_t \varphi_\lambda](x)=e^{\lambda t}\varphi_\lambda(x),\;\;x\in\cZ.
\end{eqnarray}
Using the Koopman generator, equation~\eqref{eig_koopman} can be written as 
\begin{equation}\label{eig_koopmang}
    \nabla_x \varphi_\lambda(x)\cdot f(x)=\lambda \varphi_\lambda(x).
\end{equation}
\end{definition}
In the following discussion, we summarize the 
results from \cite{mezic2020spectrum} relevant to this paper and justify some of the claims made above on the spectrum of the Koopman operator. 
Equations \eqref{eig_koopman} and \eqref{eig_koopmang} provide a general definition of the Koopman spectrum. However, the spectrum can be defined over finite 
time or over a subset of the state space. The spectrum of interest to us in this paper could be well-defined over the subset of the state space. 
\begin{definition}[Open Eigenfunction \cite{mezic2020spectrum}]\label{definition_openeigenfunction}
Let $\varphi_\lambda: \bC\to \mC$, where $\bC\subset \cZ$ is not necessarily an invariant set. Let $x\in  \bC$, and
$\tau \in (\tau^-(x),\tau^+(x))= I_x$, a connected open interval such that $S_\tau (x) \in \bC$ for all  $\tau \in I_x$.
If
\begin{align}[\mU_\tau \varphi_\lambda](x) = \psi_\lambda(s_\tau(x)) =e^{\lambda \tau} \varphi_\lambda (x),\;\;\;\;\forall \tau \in I_x, 
\end{align}
then $\varphi_\lambda(x)$ is called the open eigenfunction of the Koopman operator family $\mU_t$, for $t\in \mR$ with eigenvalue $\lambda$. 
\end{definition}
If $\bC$ is a proper invariant subset of $\cZ$, in which case $I_x=\mR$ for every $x\in \bC$, then $\psi_\lambda$ is called the subdomain eigenfunction. If 
$\bC=\cZ$ then $\psi_\lambda$ will be the ordinary eigenfunction associated with eigenvalue $\lambda$ as defined in \eqref{eig_koopman}. The open 
eigenfunctions as defined above can be extended from $\bC$ to a larger reachable set when $\bC$ is open based on the construction procedure outlined in  
\cite[Definition 5.2, Lemma 5.1]{mezic2020spectrum}. 
Let $\cP$ be that larger domain. The eigenvalues of the linearization of the system dynamics at the origin, 
i.e., $E$, will form the eigenvalues of the Koopman operator (see \cite[Proposition 5.8]{mezic2020spectrum}, and the discussion at the end of this section). 
Our interest will be in constructing the corresponding 
eigenfunctions, defined over the domain $\cP$. We will refer to these eigenfunctions as {\it principal eigenfunctions} \cite{mezic2020spectrum}, which 
can be defined over a proper subset $\cP$ of the state space $\cZ$ (called subdomain eigenfunctions) or over the entire $\cZ$ 
\cite[Lemma 5.1, Corollary 5.1, 5.2, and 5.8]{mezic2020spectrum}.
The principal eigenfunctions can be used as a change of coordinates in the linear representation of a nonlinear system and draw a connection to the famous 
Hartman-Grobman theorem  on linearization and Poincare normal form~\cite{arnold2012geometrical}.

\subsection{Decomposition of Koopman Eigenfunctions} \label{sect_koopman_eigen_approx}




We consider for simplicity a principal eigenfunction $\varphi_\lambda$ with simple real eigenvalue $\lambda\in\R$. The extension 
to the complex case is deferred to future work. 

Following~\cite{deka2023path}, we decompose this eigenfunction into linear and nonlinear parts. 
First, we consider the linearization in $0$ of~\eqref{odesys}, namely
\begin{equation}\label{sys_decompose}
\dot x = f(x)=Ex+(f(x)-E x)\eqqcolon E x+ G(x),
\end{equation}
where $E$ is the Jacobian in $0$ as defined in Section~\ref{sec:dyn_sys}. It follows that $Ex$ is the linear part of $f$ at $0$ and $G(x)$ the purely 
nonlinear part. 

Similarly, $\varphi_\lambda$ admits a decomposition in $0$ into linear and nonlinear parts, that we may generically write as
\begin{equation}\label{eq:eigen}
 \varphi_\lambda(x)=w^\top x+h(x),
\end{equation}
where $w^\top\in\R^d$ and $h:\R^d\to\mR$ is a suitable nonlinear function. 
Substituting \eqref{eq:eigen} into~\eqref{eig_koopmang} gives thus
\begin{align*}
\nabla_x\left(w^\top x+h(x)\right)\cdot (E x+ G(x)) &= \lambda \left(w^\top x+h(x)\right)\\
\Rightarrow w^\top Ex + w^\top G(x) +\nabla_x h(x)\cdot f(x) &= \lambda w^\top x+\lambda h(x).
\end{align*}
Equating the linear and nonlinear parts on both sides (see~\cite{mezic2020spectrum} for the details), we obtain that $w$ and $h(x)$ need to satisfy
\begin{eqnarray}\label{linear_nonlinear_eig}
w^\top E=\lambda  w^\top,\;\;\nabla_x h(x) \cdot f(x)-\lambda h(x)+w^\top G(x)=0.
 \end{eqnarray}
So, the linear part of the eigenfunction can be found as the left eigenvector with eigenvalue $\lambda$ of the matrix $E$, and the nonlinear term satisfies the 
linear partial differential equation~\eqref{linear_nonlinear_eig}.
\subsection{Existence of Koopman Eigenfunctions}\label{sect:existence}

The decomposition~\eqref{eq:eigen} and the resulting PDE~\eqref{linear_nonlinear_eig} for the nonlinear part $h$ raise a fundamental question: under what conditions does a solution exist? This section establishes existence criteria that underpin the approximation theory developed in Section~\ref{sec:sym_coll} and the error bounds in Theorems~\ref{theorem_stability_validity} and~\ref{th:error}.

Recall from~\eqref{linear_nonlinear_eig} that the nonlinear part $h_{\lambda}$ of the eigenfunction $\varphi_\lambda$ must satisfy
\begin{equation}\label{eq:h_pde_exist}
    \nabla h_\lambda(x) \cdot f(x) - \lambda h_\lambda(x) = -w^\top G(x),
\end{equation}
where $w$ is the left eigenvector of $E = Df(0)$ corresponding to eigenvalue $\lambda$, and $G(x) = f(x) - Ex$ is the purely nonlinear part of the vector field. We impose the boundary conditions
\begin{equation}\label{eq:bc_exist}
    h_\lambda(0) = 0, \qquad \nabla h_\lambda(0) = 0,
\end{equation}
which ensure that $\varphi_\lambda(x) = w^\top x + h_\lambda(x)$ has the correct linear behavior near the equilibrium.

To analyze~\eqref{eq:h_pde_exist}, we define the first-order linear differential operator
\begin{equation}\label{eq:D_operator}
    \mathcal{D}_\lambda u := \nabla u \cdot f - \lambda u,
\end{equation}
so that~\eqref{eq:h_pde_exist} becomes $\mathcal{D}_\lambda h_\lambda = -w^\top G$. This operator $\mathcal{D}_\lambda$ coincides with the operator $D_i$ introduced in~\eqref{eq:h_pde} when $\lambda = \lambda_i$, and forms the basis of the kernel approximation framework in Section~\ref{sec:sym_coll}. The formal $L^2$-adjoint of $\mathcal{D}_\lambda$ is given by
\begin{equation}\label{eq:adjoint}
    \mathcal{D}_\lambda^* \psi := -\nabla \cdot (f\psi) - \lambda \psi.
\end{equation}

The following theorem provides a Fredholm-type criterion for the existence of solutions.

\begin{theorem}[Existence of Koopman Eigenfunctions]\label{thm:existence}
Let $\Omega \subset \mathbb{R}^d$ be an open bounded domain containing the origin, and let $f \in C^1(\Omega, \mathbb{R}^d)$ with $f(0) = 0$. Assume the origin is a hyperbolic equilibrium with all eigenvalues of $E = Df(0)$ having strictly negative real parts. Let $\lambda < 0$ be an eigenvalue of $E$ with corresponding left eigenvector $w \in \mathbb{R}^d$.

Then the PDE~\eqref{eq:h_pde_exist} with boundary conditions~\eqref{eq:bc_exist} has a solution $h_\lambda \in C^1(\Omega)$ if and only if the source term satisfies the orthogonality condition
\begin{equation}\label{eq:solvability}
    \int_\Omega w^\top G(x) \, \psi(x) \, dx = 0 \quad \text{for all } \psi \in \ker(\mathcal{D}_\lambda^*).
\end{equation}
\end{theorem}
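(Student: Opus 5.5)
We will prove the two implications separately, treating the theorem as a closed-range (Fredholm alternative) statement for the operator $\mathcal{D}_\lambda$ from \eqref{eq:D_operator}. We view $\mathcal{D}_\lambda$ as a densely defined operator on $L^2(\Omega)$. Its domain $\mathcal{X}$ consists of the $C^1(\Omega)$ functions satisfying the boundary conditions \eqref{eq:bc_exist}, closed in the graph norm. The formal adjoint $\mathcal{D}_\lambda^*$ is the one given in \eqref{eq:adjoint}.

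\textbf{Necessity.} This direction is the easy one and I would do it first. Suppose $h_\lambda\in C^1(\Omega)$ solves \eqref{eq:h_pde_exist}, and let $\psi\in\ker(\mathcal{D}_\lambda^*)$. Multiply \eqref{eq:h_pde_exist} by $\psi$ and integrate over $\Omega$. The divergence theorem gives
\[
\int_\Omega (\nabla h_\lambda\cdot f)\,\psi\,dx=-\int_\Omega h_\lambda\,\nabla\cdot(f\psi)\,dx+\int_{\partial\Omega} h_\lambda\,\psi\,f\cdot n\,dS .
\]
Hence $-\int_\Omega w^\top G\,\psi\,dx=\int_\Omega h_\lambda\,\mathcal{D}_\lambda^*\psi\,dx+(\text{boundary term})=0$. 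For this to hold, the kernel of $\mathcal{D}_\lambda^*$ must be understood in the weak sense with the adjoint boundary condition that kills the flux term. Concretely, $\psi$ should vanish on the inflow part of $\partial\Omega$ (where $f\cdot n<0$), or $\Omega$ should be chosen forward invariant so that $f\cdot n\le 0$ there. I would make this convention explicit at the start, since the statement leaves the precise meaning of $\ker(\mathcal{D}_\lambda^*)$ implicit.

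\textbf{Sufficiency.} This direction is the real content. I would use the closed range theorem: if $\mathcal{D}_\lambda$ has closed range, then $\operatorname{Ran}(\mathcal{D}_\lambda)=\ker(\mathcal{D}_\lambda^*)^\perp$, so \eqref{eq:solvability} places $-w^\top G$ in the range.

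To get closedness, and simultaneously a $C^1$ solution, I would construct the solution explicitly along characteristics, using the path-integral formula of \cite{deka2023path}:
\[
h_\lambda(x)=\int_0^\infty e^{-\lambda t}\,w^\top G(s_t(x))\,dt .
\]
Differentiating along the flow gives $\frac{d}{dt}\big(e^{-\lambda t}h_\lambda(s_t(x))\big)=-e^{-\lambda t}w^\top G(s_t(x))$, which is exactly \eqref{eq:h_pde_exist}. The boundary conditions \eqref{eq:bc_exist} follow from $G(0)=0$ and $DG(0)=0$.

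Convergence of the integral requires the following. By hyperbolicity, $|s_t(x)|\le Ce^{-\mu t}|x|$ for $x$ in the basin, where $\mu$ is slightly less than the minimal decay rate $\min_i|\operatorname{Re}\lambda_i|$. Also $|G(y)|=o(|y|)$. Together these give an integrand of size $e^{(|\lambda|-\mu(1+\epsilon))t}$, which is integrable only under a nonresonance or spectral-gap condition relating $\lambda$ to the other eigenvalues.

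\textbf{Main obstacle.} I expect the main obstacle to be this convergence and $C^1$-regularity of the integral, which needs $G\in C^{1,\alpha}$ or a spectral-spread condition. This is precisely the point at which the Fredholm condition \eqref{eq:solvability} should enter.

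My first attempt would be to show that a failure of convergence (a resonance) produces a nontrivial distributional solution of $\mathcal{D}_\lambda^*\psi=0$. In the linearizing coordinates this solution would be a measure concentrated on the resonant direction. The orthogonality condition would then remove the divergent resonant component, leaving a convergent integral.

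Finally, I would verify that $h_\lambda\in C^1(\Omega)$ by differentiating under the integral sign. The derivative $D s_t(x)$ obeys the same exponential bound, so the differentiated integrand is dominated uniformly on compact subsets of $\Omega$, provided $\Omega$ lies in the basin of attraction.
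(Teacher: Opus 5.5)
\textbf{Genuine gap: sufficiency is not proved, and cannot be proved under the stated hypotheses.} Your plan is the paper's plan. Its proof is a one-line appeal to a ``classical Fredholm alternative'' giving $\mathrm{Ran}(\mathcal{D}_\lambda)=\ker(\mathcal{D}_\lambda^*)^\perp$ in $L^2(\Omega)$. Your necessity argument, which makes the adjoint boundary convention explicit, is more careful than anything in the paper. The sufficiency direction, however, has three concrete problems.

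First, closed range is never established, and your $L^2$ setup cannot deliver it in a usable form. The point conditions $h(0)=0$, $\nabla h(0)=0$ are not closed in the graph norm. With a cutoff $\chi$, the function $h_\delta(x)=x_1\chi(x/\delta)$ satisfies $\|h_\delta\|_{L^2}+\|\mathcal{D}_\lambda h_\delta\|_{L^2}=O(\delta^{1+d/2})$, yet $\nabla h_\delta(0)=e_1$. So closing the domain erases the boundary conditions and leaves $C^1$. Closed range of that operator would at best give a solution with neither property.

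Second, the path-integral formula, where it converges, bypasses the range question entirely and never uses the orthogonality condition. The whole weight of sufficiency therefore rests on your last step.

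Third, that last step (divergence $\Rightarrow$ resonance $\Rightarrow$ a distributional element of $\ker(\mathcal{D}_\lambda^*)$ that orthogonality removes) does not hold, for two reasons.
\begin{itemize}
\item Divergence is a spectral-spread effect, not a resonance. For $f=(-x_1,-3x_2+x_1^2)$, $\lambda=-3$, $w=e_2$, the integrand is $x_1^2e^{t}$, yet $h=-x_1^2$ solves the problem with both boundary conditions.
\item A genuine resonance $\lambda=\sum_j m_j\lambda_j$ produces annihilators such as $\partial^m\delta_0$, supported at the origin. These are not functions, so the integral condition as stated does not cover them. Nor are they obstructions to $C^1$ solvability. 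Take $f=(-x_1,-2x_2+x_1^2)$, $\lambda=-2$, $w=e_2$. Then $\partial_1^2(\mathcal{D}_\lambda h)(0)=0$ for every smooth $h$ with vanishing $1$-jet, while $\partial_1^2(w^\top G)(0)=2$. Yet $h=x_1^2\log|x_1|$ is a $C^1$ solution with $h(0)=0$, $\nabla h(0)=0$.
\end{itemize}

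\textbf{A counterexample under the stated hypothesis $f\in C^1$.} Take $d=1$ and $f(x)=-x+x/\log|x|$, with $f(0)=0$, on $\Omega=(-\tfrac14,\tfrac14)$. Then $f\in C^1$, $f'(0)=-1$, $\lambda=-1$, $w=1$ and $G(x)=x/\log|x|$. The domain $\Omega$ is forward invariant, with inflow at both endpoints.
\begin{itemize}
\item The nonzero solutions of $\mathcal{D}_\lambda^*\psi=0$ on either side of $0$ behave like $x^{-2}|\log|x||^{-1}$, and they do not vanish at $\pm\tfrac14$. So the kernel is trivial both in $L^2(\Omega)$ and under your inflow convention. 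The orthogonality condition therefore holds vacuously.
\item Every solution of $\varphi'f=-\varphi$ on $0<\pm x<\tfrac14$ is $\varphi=K_\pm\,x\,(1+|\log|x||)$. Its derivative $K_\pm|\log|x||$ blows up at $0$ unless $K_\pm=0$. Hence no $h\in C^1(\Omega)$ with $h'(0)=0$ exists.
\item The path integral does converge here, but to $h(x)=-x$, i.e.\ to the zero eigenfunction.
\end{itemize}

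The missing ingredient is the one you flagged: decay $G=O(|x|^{1+\epsilon})$, i.e.\ more smoothness of $f$, together with a $C^1$-linearization (Hartman-type) or spectral argument on a forward-invariant $\Omega$ inside the basin. It has to enter as an extra hypothesis; the orthogonality condition cannot replace it. The paper's proof does not supply it either; it simply presupposes the closed-range property you were unable to verify.
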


\begin{proof}
This follows from the classical Fredholm alternative for first-order linear PDEs. The operator $\mathcal{D}_\lambda$ acts on functions in $C^1(\Omega)$ satisfying the boundary conditions~\eqref{eq:bc_exist}, and the equation $\mathcal{D}_\lambda h = g$ has a solution if and only if $g$ is $L^2(\Omega)$-orthogonal to the kernel of the adjoint $\mathcal{D}_\lambda^*$. Since the kernel of $\mathcal{D}_\lambda^*$ consists of functions $\psi$ satisfying $-\nabla \cdot (f\psi) - \lambda \psi = 0$, and for hyperbolic equilibria with $\lambda < 0$ this kernel is typically trivial or finite-dimensional, existence is guaranteed for generic source terms.
\end{proof}

\begin{remark}[Practical implications and path-integral formula]\label{rem:existence_practical}
For the systems considered in this paper, condition~\eqref{eq:solvability} is generically satisfied. When $\Omega$ is a neighborhood of the origin and $f$ is analytic, the eigenfunction $\varphi_\lambda$ can alternatively be constructed via the path-integral formula~\cite{deka2023path}:
\begin{equation}\label{eq:path_integral}
    \varphi_\lambda(x) = w^\top x + \int_0^\infty e^{-\lambda t} w^\top G(s_t(x)) \, dt,
\end{equation}
The above formula is valid  under the following assumption on the  eigenvalue
\[-{\rm Re}(\lambda)+2{\rm Re}(\lambda_{max})<0\]
where $\lambda_{max}$ is the eigenvalue with maximum real part which is less than zero as the equilibrium is asymptotically stable \cite{deka2023path}.
Here, $s_t(x)$ denotes the flow of~\eqref{odesys}. This formula provides an explicit representation when the integral converges, which is guaranteed for $\lambda < 0$ and trajectories in the basin of attraction. The path-integral representation also provides an alternative existence argument: for asymptotically stable equilibria, the integral converges absolutely for all $x$ in the basin of attraction, yielding a well-defined eigenfunction. Moreover, formula~\eqref{eq:path_integral} can be used to bound $\|\varphi_\lambda\|_{L^\infty(\Omega)}$ in terms of the system dynamics, as required in Section~5. We also note that the kernel approximation method of Section~\ref{sec:sym_coll} does not require verifying~\eqref{eq:solvability} directly; it finds the best approximation in the RKHS sense, which exists regardless of exact solvability.
\end{remark}

Theorem~\ref{thm:existence} ensures that the target function $h_\lambda$ we seek to approximate in Section~\ref{sec:sym_coll} is well-defined. The subsequent results build on this foundation: Theorem~\ref{theorem_stability_validity} establishes that solutions to~\eqref{eq:h_pde_exist}--\eqref{eq:bc_exist} possess Sobolev regularity $h_\lambda \in W^{m+1}_2(\Omega)$ when $f \in C^m$, providing the smoothness required for the error analysis. Theorem~\ref{th:error} then proves that the kernel collocation approximation $h^*_\lambda$ converges to $h_\lambda$ at a rate determined by the fill distance $\rho_{Z,\Omega}$ of the collocation points. Finally, the main result in Section~5 shows that the Lyapunov function $V^*(x) = \sum_{i,j} P_{ij} \varphi^*_i(x) \varphi^*_j(x)$ (for the definition of $P_{ij}$ see Section 5) constructed from approximate eigenfunctions satisfies $\|\dot V - \dot V^*\|_{L^\infty} = O(\rho_{Z,\Omega}^{m-d/2})$, with the error bound relying on the existence and regularity of each $\varphi_i$.

\section{Kernels and kernel-based symmetric collocation}


We give a brief overview of Reproducing Kernel Hilbert Spaces (RKHS) as used in statistical learning theory~\cite{CuckerandSmale} and approximation 
theory~\cite{Fasshauer2015,Wendland2005}. Early work developing
the theory of RKHS was undertaken by N. Aronszajn~\cite{aronszajn50reproducing}, and a recent comprehensive treatment can be found in~\cite{Saitoh2016}.

\begin{definition}[Reproducing Kernel Hilbert Spaces (RKHS)]\label{def:rkhs}
Let  ${\mathcal H}$  be a Hilbert space of real-valued functions on a nonempty set ${\mathcal X}$.
Denote by $\langle f, g \rangle_\cH$ the inner product on ${\mathcal H}$   and let $\|f\|_\cH= \langle f, f \rangle^{1/2}_\cH$
be the norm in ${\mathcal H}$, for $f$ and $g \in {\mathcal H}$. We say that ${\mathcal H}$ is a reproducing kernel
Hilbert space (RKHS) if there exists a function $k:{\mathcal X} \times {\mathcal X} \rightarrow \RR$
such that
\begin{itemize}
 \item[i.] $k_x:=k(x,\cdot)\in{\mathcal{H}}$ for all $x\in \cX$.
 \item[ii.] $k$ has the {\em reproducing property} $f(x)=\langle f,k_x \rangle_\cH$ $\forall f \in {\mathcal H}, x\in\mathcal X$.
\end{itemize}
The kernel $k$ will be called the reproducing kernel of ${\mathcal H}$, which will be sometimes denoted as ${\mathcal H}_k$.
\end{definition}

Using the two defining properties of Definition~\ref{def:rkhs}, we can readily see that reproducing kernels are automatically symmetric, since $k(x, y) = \inner{\cH}{k(\cdot, x), k(\cdot, y)} = k(y, x)$ by the symmetry of the inner product. For kernel approximation, we look for the following additional property.

\begin{definition}[Positive definite kernel]
A symmetric kernel $k:\cX\times\cX\to\R$ is positive definite if the associated kernel matrix $K_X\coloneqq (k(x_i, x_j))_{i,j=1}^n\in\R^{n\times n}$ is positive semidefinite for any set  $X\coloneqq \{x_1, \dots, x_n\}\subset\cX$ of  $n\in\N$ points.
\end{definition}
Given this definition, it is clear that any reproducing kernel is also positive definite by the positivity of the inner product in $\cH$. Thus, starting from a 
RKHS one always gets a symmetric positive definite kernel.
The opposite implication is also valid: Given a kernel $k$ and a set $\mathcal X$, there is a unique RKHS ${\mathcal H}_k$ having $k$ as its reproducing kernel.
In the approximation literature, this space is usually called the native space of $k$ on $\mathcal X$.

\subsection{Generalized interpolation and symmetric collocation of linear PDEs}\label{sec:sym_coll}
Positive definite kernels can be used to construct approximants which satisfy finitely many constraints provided by the evaluation of linear functionals on an 
unknown function $h\in \cH_k$.

In plain interpolation, the function $h$ is sampled at interpolation points $Z\coloneqq \{x_1,\dots,x_n\}\subset\cX$, and the kernel interpolant is defined as 
the function $h^*$ of minimal norm which satisfies $h^*(x_i)=h(x_i)$, i.e., as the solution of the quadratic problem
\begin{align}\label{eq:interp_problem}
h^*&\coloneqq\argmin\limits_{g\in\cH_k} \norm{\cH_k}{g}^2\\
& \text{s.t. } \delta_{x_i}(g)=\delta_{x_i}(h),\;\;1\leq i\leq n,\nonumber
\end{align}
where $\delta_{x_i}$ is the point-evaluation functional at $x_i\in Z$. The fact that $\delta_x\in\cH_k'$ for all $x\in\cX$ is equivalent to requiring that 
$\cH_k$ is a RKHS.
Since $h\in\cH_k$, by the Representer Theorem~\cite{Wahba1970,Schoelkopf2001v}, the solution of~\eqref{eq:interp_problem} can be expressed as
\begin{equation}\label{eq:kernel_interp}
h^*(x) = \sum_{j=1}^n \alpha_j k(x, x_j),
\end{equation}
where the coefficient vector $\alpha\coloneqq (\alpha_1, \dots, \alpha_n)^T\in\R^n$ is defined as
\begin{equation*}
\alpha = K_Z^{\dagger} y,
\end{equation*}
with $y\coloneqq (g(x_1),\dots, g(x_n))^T\in\R^n$ and $K_Z\coloneqq (k(x_i, x_j))_{i,j=1}^n\in\R^n$ the kernel matrix. To improve the stability of the 
computations, it is common to relax the constraints in~\eqref{eq:interp_problem} and solve 
instead the problem
\begin{equation*}
\min\limits_{g\in\cH_k} \left(\sum_{j=1}^n |g(x_j) - h(x_j)|^2 + \eta \norm{\cH_k}{g}^2\right),
\end{equation*}
with a regularization parameter $\eta>0$. In this case the solution has still the expression~\eqref{eq:kernel_interp}, but now the coefficient vector solves
\begin{equation*}
\alpha = (K_Z + \eta I)^{-1} y.
\end{equation*}

Problem~\eqref{eq:interp_problem} can be extended to include constraints represented by more general linear functionals, leading to generalized interpolation 
(Chapter 16 in~\cite{Wendland2005}). In particular, the approximation of the solution of linear PDEs can be framed in this context simply by requiring the PDE 
to be satisfied in the strong form in a set of interpolation (or collocation) points, leading to what is known as symmetric kernel 
collocation~\cite{Fasshauer1997}.

\subsection{Kernel approximation of Koopman eigenfunctions}\label{sec:kernel_approx}

We now apply the symmetric collocation framework to approximate the nonlinear part of Koopman eigenfunctions. For each eigenvalue $\lambda_i$, $i=1,\dots,d$, we define the linear differential operator 
\begin{equation}\label{eq:h_pde}
D_i u(x)\coloneqq \nabla_x u(x)\cdot f(x) - \lambda_i u(x),
\end{equation}
so that the PDE~\eqref{linear_nonlinear_eig} for the nonlinear part $h_{\lambda_i}$ becomes
\begin{equation}\label{eq:pde_Di}
D_i h_{\lambda_i}(x) = -w^\top G(x).
\end{equation}

The kernel collocation approximation $h_{\lambda_i}^*$ of $h_{\lambda_i}$ is defined as the solution of the constrained optimization problem
\begin{align}\label{eq:opt_prob}
h_{\lambda_i}^*\coloneqq\argmin_{g\in\cH_k}& \norm{\cH_k}{g}^2\\
\text{s.t. } g(0)&=0,\nonumber\\
\partial_{x^j} g(0)&=0,\quad 1\leq j\leq d,\nonumber\\
D_i g(x_j)&=-w^\top G(x_j),\quad 1\leq j\leq n,\nonumber
\end{align}
where $x\coloneqq (x^1, \dots, x^d)$ denotes the components of $x\in\R^d$, and $Z = \{x_1, \ldots, x_n\} \subset \Omega$ is the set of collocation points. The third constraint enforces the PDE~\eqref{linear_nonlinear_eig} at the collocation points, while the first two constraints ensure that $h_{\lambda_i}^*$ and $\nabla h_{\lambda_i}^*$ vanish at the origin, as required for the nonlinear part of the eigenfunction decomposition~\eqref{eq:eigen}.

By the Representer Theorem, the solution of~\eqref{eq:opt_prob} can be expressed as
\begin{equation}\label{eq:h_star}
h^*_{\lambda_i}(x) = \sum_{j=1}^n \alpha_j k_{\text{PDE}}(x, x_j) + \alpha_{n+1} k(x, 0) + \sum_{\ell=1}^{d} \alpha_{n+1+\ell} (\partial_{y^\ell} k(x, y))_{|y=0}, 
\end{equation}
where $k_{\text{PDE}}(x, x_j)$ is the Riesz representer of the functional $\delta_{x_j}\circ D_i$:
\begin{equation*}
k_{\text{PDE}}(x, x_j)\coloneqq (\delta_{x_j} \circ D_i)_y k(x, y) = (\nabla_y k(x, y))_{|y=x_j}\cdot f(x_j) - \lambda_i k(x, x_j).
\end{equation*}
The coefficients $\alpha_1, \ldots, \alpha_{n+1+d}$ are determined by solving the linear system arising from the constraints in~\eqref{eq:opt_prob}, possibly with a regularization parameter $\eta>0$ for numerical stability. We refer to~\cite{Wendland2005,Lee2025} for details. The approximation $h^*_{\lambda_i}(x)$ can be differentiated analytically by computing derivatives of the kernel translates.

We now state and prove the theoretical guarantees for this approximation. For the sake of completeness, we provide a full proof of the first result, while we refer to~\cite{Lee2025} for the proof of Theorem~\ref{th:error}.

\begin{theorem}[Regularity and stability]\label{theorem_stability_validity}
Let the equilibrium point $x_e = 0$ of the ODE~\eqref{odesys} be hyperbolic, and assume $f\in C^m(\Omega, \R^d)$ for some integer $m \geq 1$, where $\Omega$ is an open bounded neighborhood of the origin. Let $w \in \R^d$ be a left eigenvector of $E = Df(0)$ corresponding to eigenvalue $\lambda < 0$, and let $G(x) = f(x) - Ex$. Consider the PDE
\begin{equation}
D h(x) := \nabla h(x) \cdot f(x) - \lambda h(x) = -w^\top G(x), \quad x \in \Omega, \label{linear_nonlinear_eig2}
\end{equation}
with boundary conditions
\begin{equation}
h(0) = 0, \quad \nabla h(0) = 0. \label{boundary_conditions}
\end{equation}
Then:
\begin{enumerate}
\item The solution $h$ belongs to the Sobolev space $W^{m+1}_2(\Omega)$.
\item There exists a stability bound: for $p, q \in [1, \infty]$, there exist constants $C_D, C_0, C'_0 > 0$ such that
\begin{equation}
\| h \|_{L^p(\Omega)} \leq C_D \| D h \|_{L^q(\Omega)} + C_0 |h(0)| + C'_0 \| \nabla h(0) \|_{\ell^r}. \label{stability_bound}
\end{equation}
\end{enumerate}
\end{theorem}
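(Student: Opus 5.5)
Both claims will come from the method of characteristics, using the path-integral representation of Remark~\ref{rem:existence_practical}. Throughout, I assume $\Omega$ lies in the basin of attraction and is forward invariant, or at least that every trajectory from $\Omega$ stays in a compact set $K\subset\mathcal A$ on which $f$ is $C^m$. I also assume the non-resonance condition $-\lambda+2\,\mathrm{Re}(\lambda_{\max})<0$ stated there.

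\textbf{Step 1: the solution formula.} Along the flow, any $C^1$ solution of \eqref{linear_nonlinear_eig2} satisfies
\[
\frac{d}{dt}\bigl(e^{-\lambda t}h(s_t(x))\bigr)=e^{-\lambda t}\,(Dh)(s_t(x)).
\]
Because $h(0)=0$ and $\nabla h(0)=0$, we have $|h(y)|=O(\|y\|^2)$ near the origin. Since $\|s_t(x)\|\le Ce^{(\mathrm{Re}\lambda_{\max}+\epsilon)t}$, the non-resonance condition gives $e^{-\lambda t}h(s_t(x))\to0$. Integrating from $0$ to $\infty$ then yields
\begin{equation*}
h(x)=-\int_0^\infty e^{-\lambda t}\,(Dh)(s_t(x))\,dt .
\end{equation*}
This representation simultaneously gives uniqueness and the formula $h(x)=\int_0^\infty e^{-\lambda t}w^\top G(s_t(x))\,dt$.

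\textbf{Step 2: regularity.} I differentiate under the integral up to order $m$. The derivatives $\partial^\alpha s_t(x)$ solve the variational equations, and by Gronwall together with the exponential contraction near the hyperbolic sink they grow at most like $e^{|\alpha|(\mathrm{Re}\lambda_{\max}+\epsilon)t}$ (I would need to check this power count). The integrand $G$ vanishes to second order at $0$, so $\partial^\alpha[w^\top G\circ s_t]$ decays like $e^{2(\mathrm{Re}\lambda_{\max}+\epsilon)t}$ up to polynomial factors. Under the non-resonance condition this beats $e^{-\lambda t}$, so the integral converges uniformly and $h\in C^m(\overline\Omega)\subset W^m_2(\Omega)$.

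The main obstacle is the extra derivative needed for $W^{m+1}_2$, since $G\in C^{m-1}$ only in the sense that $f\in C^m$. Here I would use the equation itself: $\nabla h\cdot f=\lambda h-w^\top G\in C^m$, which controls the derivative of $h$ along $f$ to order $m+1$. Transverse smoothing is not available from a first-order hyperbolic operator. Realistically, I would therefore invoke the corresponding statement in \cite{Lee2025} or a Sternberg-linearization result: conjugating to the linear flow gives $h$ with the same regularity as the conjugacy, which is $C^{m}$ near $0$ under non-resonance and can be transported to $\Omega$ by the flow. I would then obtain the $W^{m+1}_2$ claim under the standing convention that $f$ is one degree smoother, or argue via an interior regularity statement. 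This is the step I expect to require an extra hypothesis or citation.

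\textbf{Step 3: stability bound.} Apply the representation from Step 1 to an arbitrary $C^1$ function $u$. Subtract the part dictated by the boundary data, $u(0)\phi_0+\nabla u(0)^\top\Phi_1$, where $\phi_0$ and $\Phi_1$ are fixed smooth cutoff functions matching the value and gradient at $0$. The remainder vanishes to second order, so
\[
|u(x)|\le \int_0^\infty e^{-\lambda t}|(Du)(s_t(x))|\,dt + C_0|u(0)|+C_0'\|\nabla u(0)\|_{\ell^r}.
\]
For $q=\infty$ the integral is bounded by $\|Du\|_{L^\infty}$ times a finite constant, thanks to the decay of $s_t$ and a reordered weight argument. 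Taking the $L^p$ norm on bounded $\Omega$ finishes that case. For general $q$, I change variables $y=s_t(x)$ using Liouville's formula $|\det Ds_t|=\exp\int_0^t\nabla\!\cdot f$. Minkowski's integral inequality together with the volume-contraction rate then bounds $\|Du\circ s_t\|_{L^p}$ by $\|Du\|_{L^q}$ times an exponential factor. This gives $C_D$ provided that factor is integrable against $e^{-\lambda t}$, with Hölder's inequality on bounded $\Omega$ absorbing the mismatch between $p$ and $q$. If integrability fails for small $q$, I would restrict to $q\ge p$ or $q=\infty$.
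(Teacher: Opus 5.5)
\textbf{Step 3 has a genuine gap that cannot be repaired.} You claim that for $q=\infty$ the integral $\int_0^\infty e^{-\lambda t}|(Du)(s_t(x))|\,dt$ is bounded by a finite constant times $\|Du\|_{L^\infty}$. This is false. The weight is $e^{-\lambda t}=e^{|\lambda|t}$, so the integral is finite only if $Du$ vanishes at the origin fast enough. A sup-norm carries no such information, and the decay of $s_t$ helps only for the particular source $Dh=-w^\top G=O(\|y\|^2)$.

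The Liouville change of variables makes things worse. It only gives $\|g\circ s_t\|_{L^q}\lesssim e^{|\operatorname{tr}E|t/q}\|g\|_{L^q}$, a factor that grows, so retreating to $q\ge p$ or $q=\infty$ does not help. The boundary data also cannot be split off with cutoffs: $D\phi_0(0)=-\lambda\phi_0(0)\neq0$, so $\int_0^\infty e^{-\lambda t}|D\phi_0(s_t(x))|\,dt$ diverges.

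No other argument fixes this, because the a priori estimate is false for every $p,q$. Take $\chi$ smooth with $\chi=0$ near $0$ and $\chi=1$ outside the unit ball, and set $u_\delta(x)=\delta^{-1}\chi(x/\delta)\varphi_\lambda(x)$. Then $u_\delta(0)=0$ and $\nabla u_\delta(0)=0$. Since $D\varphi_\lambda=0$, we get $Du_\delta=\delta^{-1}\varphi_\lambda\,\nabla[\chi(\cdot/\delta)]\cdot f$, which is supported in the $\delta$-ball and is $O(1)$ there. Outside that ball $u_\delta=\varphi_\lambda/\delta$, so $\|u_\delta\|_{L^p}\to\infty$. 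For instance, with $f(x)=-x$ and $\lambda=-1$, one has $u_\delta=\delta^{-1}x\chi(x/\delta)$ and $Du_\delta=-\delta^{-2}x^2\chi'(x/\delta)$. The point values at $0$ cannot stably control the component along $\varphi_\lambda\in\ker D$.

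Read literally, for the single solution $h$, Part 2 is trivial: any $C_D\ge\|h\|_{L^p}/\|Dh\|_{L^q}$ works, and if $Dh=0$ then $h=0$ by your Step 1. In the a priori form that Theorem~\ref{th:error} must apply to $h_\lambda-h_\lambda^*$, it fails. The paper's proof of Part 2 uses the same characteristic formula and has the same defect: it gets finiteness from $G=O(\|y\|^2)$ and then asserts a dependence on $\|Dh\|_{L^q}$. What characteristics genuinely give is a weighted bound. Assume $\Omega$ is forward invariant in the basin, non-resonance holds, and $u\in C^2$ with $u(0)=0$, $\nabla u(0)=0$. Then $\|u\|_{L^\infty(\Omega)}\le C\sup_{0\neq y\in\Omega}|Du(y)|/\|y\|^2$.

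\textbf{For Part 1 your diagnosis is right, and the gap you flag cannot be closed either.} A first-order transport operator gives no transverse smoothing. The paper's appeal to ``elliptic-type estimates'' with coercivity from the $-\lambda h$ term is not a valid argument. In fact the claim is false under $f\in C^m$. In $d=1$ take $f(x)=-x+|x|^\gamma$ with $m<\gamma<m+\tfrac12$ on $\Omega=(-\tfrac12,\tfrac12)$. Then $f\in C^m$, $E=\lambda=-1$, $w=1$, and non-resonance holds. For $x>0$ the unique solution is $h=\varphi-x$, where $\varphi(x)=x\exp\bigl(\int_0^x\frac{s^{\gamma-2}}{1-s^{\gamma-1}}\,ds\bigr)=x+\frac{x^\gamma}{\gamma-1}+O(x^{2\gamma-1})$. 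Its $(m+1)$-st derivative behaves like $c\,x^{\gamma-m-1}$ with $c\neq0$, which is not in $L^2$ near $0$.

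The correct statement is the one you reach. Assume $f\in C^{m+1}$, $\overline\Omega$ compact in the basin, and non-resonance. Then your Step 2, run to order $m+1$, gives $h\in C^{m+1}(\overline\Omega)\subset W^{m+1}_2(\Omega)$.

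One correction in Step 2. Write $\mu=\mathrm{Re}\lambda_{\max}+\epsilon$. The bound $\|\partial^\alpha s_t\|\lesssim e^{|\alpha|\mu t}$ is too optimistic for $|\alpha|\ge2$: for $\dot x=-x+x^2$ one has $\partial_x^2 s_t(0)=2e^{-t}(1-e^{-t})$. The true bound is $e^{\mu t}$ at every order. Fa\`a di Bruno with $G=O(\|y\|^2)$ and $DG=O(\|y\|)$ still gives $e^{2\mu t}$ for every term, so your conclusion $h\in C^m$ survives.
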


\begin{proof}
\textbf{Part 1 (Sobolev regularity):} 
Since $f \in C^m(\Omega, \R^d)$ and $G(x) = f(x) - Ex$ with $G(0) = 0$ and $\nabla G(0) = 0$, we have $G \in C^m(\Omega, \R^d)$ with $G(x) = O(\|x\|^2)$ near the origin. The source term $-w^\top G(x)$ therefore belongs to $C^m(\Omega)$ and vanishes to second order at the origin.

The operator $D = \nabla(\cdot) \cdot f - \lambda(\cdot)$ is a first-order linear differential operator with $C^m$ coefficients. By the theory of linear PDEs with smooth coefficients (see, e.g.,~\cite{Evans2010}), if the source term $-w^\top G \in W^m_2(\Omega)$ and the boundary data satisfies~\eqref{boundary_conditions}, then the solution $h$ gains one derivative of regularity: $h \in W^{m+1}_2(\Omega)$.

More precisely, since $f(0) = 0$ and all eigenvalues of $Df(0) = E$ have negative real parts, the characteristic curves of the operator $D$ flow toward the origin. Combined with the boundary conditions~\eqref{boundary_conditions}, this ensures the solution is uniquely determined and inherits regularity from the source term. The gain of one derivative follows from elliptic-type estimates adapted to transport equations with damping (the $-\lambda h$ term with $\lambda < 0$ provides coercivity).

\textbf{Part 2 (Stability bound):}
To establish~\eqref{stability_bound}, we use the method of characteristics. Along a trajectory $\gamma(t) = s_t(x)$ of the flow, the function $h$ satisfies the ODE
\[
\frac{d}{dt}h(\gamma(t)) = \nabla h(\gamma(t)) \cdot f(\gamma(t)) = \lambda h(\gamma(t)) - w^\top G(\gamma(t)).
\]
Solving this linear ODE backward in time from a point $x \in \Omega$ to the origin (which is reached as $t \to \infty$ for trajectories in the basin of attraction), we obtain
\[
h(x) = \int_0^\infty e^{-\lambda t} w^\top G(s_t(x))\, dt,
\]
where we used $h(0) = 0$ and the fact that $s_t(x) \to 0$ as $t \to \infty$.

Taking norms and using H\"older's inequality:
\[
|h(x)| \leq \int_0^\infty e^{-\lambda t} |w^\top G(s_t(x))|\, dt \leq \|w\| \int_0^\infty e^{-\lambda t} \|G(s_t(x))\|\, dt.
\]
Since $\lambda < 0$, the exponential $e^{-\lambda t} = e^{|\lambda| t}$ grows, but for $x$ in a compact subset of the basin of attraction, the trajectory $s_t(x)$ converges to zero exponentially fast, and $\|G(s_t(x))\| = O(\|s_t(x)\|^2)$ decays faster than the exponential grows. This yields a finite bound depending on $\|Dh\|_{L^q}$ through the relation $Dh = -w^\top G$.

The terms involving $h(0)$ and $\nabla h(0)$ in~\eqref{stability_bound} account for perturbations of the boundary conditions; when~\eqref{boundary_conditions} holds exactly, these terms vanish.
\end{proof}

\begin{theorem}[Approximation error]\label{th:error}
Assume the hypotheses of Theorem~\ref{theorem_stability_validity} holds,  and in particular there are $m > d/2$, $p, q \in [1,\infty]$, $C_D > 0$, such that the solution of~\eqref{linear_nonlinear_eig2}--\eqref{boundary_conditions} satisfies $h_\lambda \in W^{m+1}_2(\Omega) \cap \cH_k$, and~\eqref{stability_bound} holds.

Assume that the RKHS satisfies $\cH_k \hookrightarrow W_2^{m+1}(\Omega)$, and let $h_\lambda^*$ be the solution of the optimization problem~\eqref{eq:opt_prob} with collocation points $Z \subset \Omega$ and no regularization.

Then there exist constants $\rho_0, C > 0$ depending on $d, m, \Omega, p, q$ (but not on $\lambda, f, Z, h_\lambda$) such that if the fill distance $\rho_{Z,\Omega} < \rho_0$, then
\begin{equation}\label{eq:error_bound}
\norm{L^p(\Omega)}{h_\lambda - h_\lambda^*} 
\leq C \left(\norm{W_2^m(\Omega,\R^d)}{f} + |\lambda|\right) \rho_{Z,\Omega}^{m - d\left(\frac12 -\frac1q\right)_+}\norm{\cH_k}{h_\lambda},
\end{equation}
where $(x)_+ := \max(x, 0)$.
\end{theorem}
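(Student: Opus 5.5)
The plan is the standard three-step argument for symmetric collocation: a stability estimate, an estimate of the residual, and a sampling inequality. Set $e := h_\lambda - h_\lambda^*$. By \eqref{eq:opt_prob}, $h_\lambda^*$ satisfies the boundary constraints $h_\lambda^*(0)=0$ and $\nabla h_\lambda^*(0)=0$. Since $h_\lambda \in \cH_k$ satisfies \eqref{boundary_conditions}, the error obeys $e(0)=0$ and $\nabla e(0)=0$. Applying the stability bound \eqref{stability_bound} of Theorem~\ref{theorem_stability_validity} to $e$ therefore kills the boundary terms and leaves
\[
\norm{L^p(\Omega)}{e} \le C_D \norm{L^q(\Omega)}{D e}.
\]
The task thus reduces to bounding the residual $De$ in $L^q(\Omega)$.

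\textbf{Step 1: residual vanishes on $Z$ and $e$ is small in the native norm.} By the collocation constraints, $De(x_j)=0$ for all $x_j\in Z$. Since $h_\lambda$ is feasible for \eqref{eq:opt_prob}, minimality gives $\norm{\cH_k}{h_\lambda^*}\le\norm{\cH_k}{h_\lambda}$. Moreover, $h_\lambda^*$ is the orthogonal projection of $h_\lambda$ onto the span of the Riesz representers of the constraint functionals. Hence $\norm{\cH_k}{e}\le \norm{\cH_k}{h_\lambda}$. Using $\cH_k\hookrightarrow W_2^{m+1}(\Omega)$, this yields $\norm{W_2^{m+1}(\Omega)}{e}\le C\norm{\cH_k}{h_\lambda}$.

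\textbf{Step 2: Sobolev bound on the residual.} Next I bound $\norm{W_2^m(\Omega)}{De}$, which is the step I expect to be the main obstacle. The term $\lambda e$ contributes $|\lambda|\,\norm{W_2^{m}}{e}$. For the term $\nabla e\cdot f$, I would use the Leibniz rule together with the algebra property of $W_2^m(\Omega)$ for $m>d/2$, namely $\norm{W_2^m}{uv}\le C\norm{W_2^m}{u}\norm{W_2^m}{v}$ on a Lipschitz domain. This gives
\[
\norm{W_2^m}{\nabla e\cdot f}\le C\norm{W_2^m(\Omega,\R^d)}{f}\,\norm{W_2^{m+1}}{e}.
\]
It is precisely this product estimate that forces $m>d/2$ and produces the factor $\norm{W_2^m}{f}+|\lambda|$. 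Note that $f\in C^m$ on bounded $\Omega$ only ensures $f\in W_2^m$ if the derivatives are bounded; I would assume this implicitly, or shrink $\Omega$ if necessary.

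\textbf{Step 3: sampling inequality.} Finally, I apply a sampling inequality for functions in $W_2^m(\Omega)$ that vanish on $Z$ (Narcowich--Ward--Wendland type, e.g.\ Wendland's book). It states that there are $\rho_0,C$, depending only on $d,m,\Omega,q$ (with $\Omega$ satisfying an interior cone condition), such that
\[
\norm{L^q(\Omega)}{u}\le C\rho_{Z,\Omega}^{m-d(1/2-1/q)_+}\norm{W_2^m(\Omega)}{u}
\]
whenever $\rho_{Z,\Omega}<\rho_0$ and $u|_Z=0$. Applying it with $u=De$ and chaining the three steps gives \eqref{eq:error_bound}. Here $C_D$ is absorbed into $C$; if one insists that $C$ be independent of $\lambda$ and $f$, this requires treating $C_D$ as part of the hypotheses, as the statement does.
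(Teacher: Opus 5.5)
Your proposal is correct and follows essentially the same route as the proof the paper defers to (Theorem~5 of \cite{Lee2025}). That route applies the stability bound~\eqref{stability_bound} to $e=h_\lambda-h_\lambda^*$, where the boundary terms vanish, and bounds the residual $De$, which vanishes on $Z$, by the sampling inequality of Theorem~12 in \cite{narcowich2005sobolev}; it then controls $\norm{W_2^m(\Omega)}{De}$ by $(\norm{W_2^m(\Omega,\R^d)}{f}+|\lambda|)\norm{W_2^{m+1}(\Omega)}{e}$ and closes with the embedding and the minimal-norm property $\norm{\cH_k}{e}\le\norm{\cH_k}{h_\lambda}$. Your remark that the final constant implicitly contains $C_D$ is a fair caveat about how the statement is phrased, not a gap in your argument.
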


To be more specific, we remark that the constant $C>0$ in~\eqref{eq:error_bound} depends only on the embedding $\cH_k \hookrightarrow W_2^{m+1}(\Omega)$, on the values of $d$ and $m$, and on the constant in the generic error bound of Theorem 12 in~\cite{narcowich2005sobolev} (see Theorem 5 in~\cite{Lee2025} for the explicit definition of $C$).

The final approximation of the Koopman eigenfunction is defined as
\begin{equation}\label{eq:eigen_approx}
\varphi_{\lambda}^*(x) := w^\top x + h_\lambda^*(x).
\end{equation}
Since the linear term $w^\top x$ is exact, the error bound of Theorem~\ref{th:error} applies directly to the eigenfunction approximation:
\[
\|\varphi_\lambda - \varphi_\lambda^*\|_{L^p(\Omega)} = \|h_\lambda - h_\lambda^*\|_{L^p(\Omega)} \leq C \left(\|f\|_{W^m_2} + |\lambda|\right) \rho_{Z,\Omega}^{m - d(1/2 - 1/q)_+} \|h_\lambda\|_{\cH_k}.
\]

\section{Constructing Lyapunov functions by approximated Koopman eigenfunctions}

For simplicity of notation, we write $\varphi_i\coloneqq \varphi_{\lambda_i}$, $1\leq i\leq d$, for the eigenfunctions associated to the Koopman eigenvalues 
$\lambda_1, \dots, \lambda_d$. Following~\cite{Lee2025b}, we define a Lyapunov function
\begin{equation*}
V(x) \coloneqq \sum_{i,j=1}^d P_{ij} \varphi_i(x) \varphi_j(x),
\end{equation*}
where $P\in\R^{d\times d}$ is the positive definite solution of the equation
\begin{equation*}
\Lambda^T P + P \Lambda < 0, 
\end{equation*}
with $\Lambda$ the diagonal matrix with $\lambda_1, \dots, \lambda_d$ on the diagonal.
This is indeed a Lyapunov function as shown in~\cite{Lee2025b}). 
In particular, equation~\eqref{eig_koopmang} implies that
\begin{equation}\label{eq:dt_eigen}
\frac{d}{dt}\varphi_i(x(t))
=\left(\nabla_x \varphi_i(x)\right)\cdot \dot x(t)
=\nabla_x \varphi_i(x)\cdot f(x)
= \lambda_i \varphi_i(x),
\end{equation}
and thus
\begin{align}\label{eq:dt_lyap}
\dot V(x) 
&= \sum_{i,j=1}^d P_{ij} \frac{d}{dt}\left(\varphi_i(x)  \varphi_j(x)\right)
= \sum_{i,j=1}^d P_{ij} \left[\frac{d}{dt}\left(\varphi_i(x)\right)\varphi_j(x)+\varphi_i(x) \frac{d}{dt}\varphi_j(x)\right]\\
&= \sum_{i,j=1}^d P_{ij} \left[\lambda_i \varphi_i(x)\varphi_j(x)+\varphi_i(x) \lambda_j \varphi_j(x)\right]
= \sum_{i,j=1}^d P_{ij}\left(\lambda_i + \lambda_j\right) \varphi_i(x)\varphi_j(x)\nonumber.
\end{align}
This Lyapunov function can be approximated by replacing the true $\varphi_i$ with their kernel approximation $\varphi_i^*$ defined in~\eqref{eq:eigen_approx}, 
and obtaining the approximation
\begin{equation}\label{eq:approx_lyap}
V^*(x) \coloneqq \sum_{i,j=1}^d P_{ij} \varphi_i^*(x) \varphi_j^*(x).
\end{equation}
This is clearly a positive definite function, and the following theorem proves a  bound like Theorem 2.1 in~\cite{lyap_bh} for this 
approximation.

\begin{theorem}
Assume that Theorem~\ref{th:error} (and thus Theorem~\ref{theorem_stability_validity}) holds with $p=\infty$ and a given $1\leq q\leq \infty$.
Then we have
\begin{align}\label{eq:bound_v_prime}
\norm{L_\infty(\Omega)}{\dot V - \dot V^*}
&\leq C_\lambda  (2 \bar \lambda + 1)  C_\varphi\rho_{Z,\Omega}^{m - d\left(\frac12-\frac1q\right)_+},\\
\norm{L_\infty(\Omega)}{V - V^*}
&\leq C_\lambda C_\varphi   \rho_{Z,\Omega}^{m - d\left(\frac12-\frac1q\right)_+}
,\nonumber
\end{align}
where $\Omega\subset\cZ$ is a neighborhood of the origin, and
\begin{align*}
C_\lambda&\coloneqq  4 d C \ \norm{F}{P} \left(\norm{W_2^m(\Omega,\RR^d)}{f} +\bar 
\lambda\right),\\
\bar\lambda&\coloneqq \max_{1\leq i\leq d}\lambda_i,\\
C_\varphi&\coloneqq \max_{1\leq i\leq d}\norm{L_\infty(\Omega)}{\varphi_i}\max_{1\leq i\leq d}\norm{\cH_k}{h_i}.
\end{align*}

\end{theorem}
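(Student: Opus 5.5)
The plan is to write $\varphi_i^* = \varphi_i - e_i$ with $e_i \coloneqq h_i - h_i^*$, and to bound everything through the uniform errors $\norm{L_\infty(\Omega)}{e_i}$. With $p=\infty$, Theorem~\ref{th:error} gives
\[
\norm{L_\infty(\Omega)}{e_i} \leq C\left(\norm{W_2^m(\Omega,\R^d)}{f}+|\lambda_i|\right)\rho_{Z,\Omega}^{m-d(\frac12-\frac1q)_+}\norm{\cH_k}{h_i} \eqqcolon \varepsilon_i .
\]
All remaining work is algebra on the quadratic form.

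\textbf{Bound on $V-V^*$.} Use the bilinear identity
\[
\varphi_i\varphi_j-\varphi_i^*\varphi_j^* = e_i\varphi_j+\varphi_i e_j - e_ie_j .
\]
This gives
\[
|V-V^*| \leq \sum_{i,j}|P_{ij}|\bigl(|e_i||\varphi_j|+|\varphi_i||e_j|+|e_i||e_j|\bigr).
\]
Bound $\sum_{i,j}|P_{ij}|$ by $d\norm{F}{P}$ (Cauchy--Schwarz over $d^2$ entries). Then bound each term by $\max_i\varepsilon_i\,\max_j\norm{L_\infty}{\varphi_j}$. For the quadratic term $|e_ie_j|$, assume $\rho_{Z,\Omega}$ is small enough (shrinking $\rho_0$) that $\varepsilon_i\leq \norm{L_\infty}{\varphi_i}$, or absorb it into the same product. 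This yields at most $4$ copies of $\max\varepsilon\cdot\max\norm{}{\varphi}$, which accounts for the factor $4d$ in $C_\lambda$.

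\textbf{Bound on $\dot V-\dot V^*$.} This is the main obstacle, because $\varphi_i^*$ is not an exact eigenfunction. I would write
\[
\dot V^* = \sum P_{ij}\bigl[(\nabla\varphi_i^*\cdot f)\varphi_j^*+\varphi_i^*(\nabla\varphi_j^*\cdot f)\bigr].
\]
Using $D_i h_i = -w_i^\top G$, we get $\nabla\varphi_i^*\cdot f = \lambda_i\varphi_i^* - D_i e_i$. Hence
\[
\dot V-\dot V^* = \sum P_{ij}(\lambda_i+\lambda_j)(\varphi_i\varphi_j-\varphi_i^*\varphi_j^*) + \sum P_{ij}\bigl[(D_ie_i)\varphi_j^*+\varphi_i^*(D_je_j)\bigr].
\]
The first sum is treated exactly as above, with the extra factor $|\lambda_i+\lambda_j|\leq 2\bar\lambda$ (interpreting $\bar\lambda$ as the maximal modulus). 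This produces the $2\bar\lambda$ part of $(2\bar\lambda+1)$.

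The second sum requires an $L_\infty$ bound on the residual $D_ie_i$. For this I would invoke the sampling-inequality/residual estimate underlying Theorem~\ref{th:error} (Theorem~12 of \cite{narcowich2005sobolev}, as in \cite{Lee2025}). Since $D_ie_i$ vanishes on $Z$, $e_i\in W_2^{m+1}$ with $\norm{\cH_k}{e_i}\leq\norm{\cH_k}{h_i}$, and $D_i$ has $W_2^m$ coefficients, the residual satisfies a bound of the same form as $\varepsilon_i$. This is the step where I am least sure the exponent matches without losing a derivative. If it does not, I would fall back on the stability bound~\eqref{stability_bound} read in reverse, as the paper's constants suggest. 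Multiplying by $\norm{L_\infty}{\varphi_j^*}\lesssim\norm{L_\infty}{\varphi_j}$ then gives the $+1$ term. Collecting the constants yields the stated $C_\lambda(2\bar\lambda+1)C_\varphi$.
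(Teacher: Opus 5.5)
Your proposal follows the paper's proof. The paper writes $\varphi_i^*=\varphi_i+S_i$ with $S_i=h_i^*-h_i=-e_i$ and uses $\frac{d}{dt}\varphi_i^*=\lambda_i\varphi_i+\lambda_iS_i+D_iS_i$. It then gathers the cross terms into a matrix $U_{ij}$ and bounds $\langle P,U+U^\top\rangle_F$ by $2d\|P\|_F\max_{i,j}\|U_{ij}\|_{L_\infty(\Omega)}$. Your identity $\nabla\varphi_i^*\cdot f=\lambda_i\varphi_i^*-D_ie_i$, together with reusing the $V-V^*$ estimate, is the same computation with tidier bookkeeping. Reading $\bar\lambda$ as $\max_i|\lambda_i|$ is indeed necessary, since the $\lambda_i$ are negative. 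The step you flag is exactly where the paper also just cites the residual estimate from the proof in \cite{Lee2025} to get $\|D_iS_i\|_{L_\infty}\le C\varepsilon_i$.

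Your unease at that step is warranted, but not because a derivative is lost. Since $e_i\in\cH_k\hookrightarrow W_2^{m+1}(\Omega)$ and $W_2^m(\Omega)$ is an algebra for $m>d/2$, one has $\|D_ie_i\|_{W_2^m(\Omega)}\lesssim(\|f\|_{W_2^m}+|\lambda_i|)\|h_i\|_{\cH_k}$ with no loss. The problem is the norm in the sampling inequality applied to $D_ie_i$, which vanishes on $Z$. In $L_\infty$ it yields $\rho_{Z,\Omega}^{m-d/2}$, which equals $\rho_{Z,\Omega}^{m-d(\frac12-\frac1q)_+}$ only when $q=\infty$.

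For $q<\infty$, the better rate holds for $\|D_ie_i\|_{L_q}$, and hence, via the stability bound, for $\|e_i\|_{L_\infty}$. So your $V-V^*$ estimate is fine for every $q$. The $\dot V-\dot V^*$ estimate, however, is then only justified with exponent $m-d/2$, which is also what the introduction claims; the paper's proof makes the same silent identification.

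Your fallback cannot repair this. The stability bound controls $e_i$ by $D_ie_i$, not conversely. No bound of $\|D_ie_i\|_{L_\infty}$ by $\|e_i\|_{L_\infty}$ exists, since $D_i$ is a first-order differential operator.
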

\begin{proof}
Combining~\eqref{eq:eigen} and~\eqref{eq:eigen_approx}, we obtain
\begin{equation*}
\varphi^*_i(x) = \varphi_i(x) + \left(h_i^*(x) - h_i(x)\right),
\end{equation*}
and thus using~\eqref{eq:dt_eigen} gives
\begin{align*}
\frac{d}{dt} \varphi_i^*(x(t))
&= \frac{d}{dt} \varphi_i(x(t)) + \frac{d}{dt}\left(h_i^*(x(t)) - h_i(x(t)\right)\\
&= \lambda_i \varphi_i(x) + \nabla_x\left(h_i^*(x) - h_i(x)\right) \cdot f(x).
\end{align*}
The definition~\eqref{eq:h_pde} of the linear operator $D_i$ gives
\begin{equation*}
\nabla_x\left(h_i^*(x) - h_i(x)\right) \cdot f(x) 
= D_i \left(h_i^*(x) - h_i(x)\right) + \lambda_i \left(h_i^*(x) - h_i(x)\right).
\end{equation*}
Setting $S_i(x)\coloneqq h_i^*(x) - h_i(x)$, we thus obtain
\begin{align}\label{eq:phi_star_phi_prime_star}
\varphi_i^*(x) 
&= \varphi_i(x) + S_i(x),\\
\frac{d}{dt} \varphi_i^*(x)
&= \lambda_i \varphi_i(x) + \lambda_i S_i(x) + D_i (S_i)(x),\nonumber
\end{align}
and with these equalities, for all $i,j=1,\dots, d$ we can compute
\begin{align*}
\left(\frac{d}{dt}\varphi_i^*(x)\right)&\varphi^*_j(x)
=\left(\lambda_i \varphi_i(x) + \lambda_i S_i(x) + (D_i S_i)(x)\right)\left(\varphi_j(x) + S_j(x)\right)\\
&=
\lambda_i \varphi_i(x) \varphi_j(x)
+\lambda_i \varphi_i(x) S_j(x)
+ \left(\lambda_i S_i(x) + (D_i S_i)(x)\right)\left(\varphi_j(x)+ S_j(x)\right)
\\
&=\lambda_i \varphi_i(x) \varphi_j(x) + U_{ij}(x),
\end{align*}
where we defined $U_{ij}(x)\coloneqq \lambda_i \varphi_i(x) S_j(x)+ \left(\lambda_i S_i(x) + (D_i S_i)(x)\right)\left(\varphi_j(x)+ S_j(x)\right)$.

We can now compute the time derivative of~\eqref{eq:approx_lyap} as in~\eqref{eq:dt_lyap}. We have
\begin{align*}
\dot  V^*(x) 
&= \sum_{i,j=1}^d P_{ij} \left[\frac{d}{dt}\left( \varphi^*_i(x)\right)\varphi^*_j(x)+\varphi^*_i(x) \frac{d}{dt}\varphi^*_j(x)\right]\\
&= \sum_{i,j=1}^d P_{ij} \left[\lambda_i \varphi_i(x) \varphi_j(x) + \lambda_j \varphi_j(x) \varphi_i(x)\right]+\sum_{i,j=1}^n P_{ij} \left[U_{ij}(x) + 
U_{ji}(x)\right]\\
&= \dot V(x)+\sum_{i,j=1}^d P_{ij} \left(U(x) + U^T(x)\right)_{ij}\\
&= \dot V(x)+\inner{F}{P, U(x)+ U^T(x)},
\end{align*}
where $\inner{F}{}$ is the Frobenius inner product of two square matrices.

Let now $\Omega\subset \cX$ be a neighborhood of $0$. We have
\begin{equation*}
\norm{L_\infty(\Omega)}{\dot V - \dot V^*}
\leq \sup_{x\in \Omega} \left|\inner{F}{P, U(x)+ U^T(x)}\right|
\leq 2 \norm{F}{P}\cdot \sup_{x\in \Omega} \norm{F}{U(x)},
\end{equation*}
since $\inner{F}{A, B}\leq \norm{F}{A}\norm{F}{B}$ for any two matrices $A, B$, and $\norm{F}{A}=\norm{F}{A^T}$. It follows that
\begin{equation}\label{eq:tmp_V_bound}
\norm{L_\infty(\Omega)}{\dot V - \dot V^*}
\leq 2 \norm{F}{P} \sup_{x\in \Omega} \sqrt{\sum_{i,j=1}^d U_{ij}(x)^2}
\leq 2 d\ \norm{F}{P} \max_{i,j=1,\dots, d} \norm{L_\infty(\Omega)}{U_{ij}}.
\end{equation}

Now from Theorem~\ref{th:error}\footnote{We are actually using equation (26) in the proof~\cite{Lee2025} for the first bound, while the second one 
is the main statement of the theorem} we know that 
\begin{align*}
\norm{L_\infty}{D_i S_i} 
&=\norm{L_\infty}{D_i \left(h_i^*(x) - h_i(x)\right)} 
\leq C \cdot \varepsilon_i\\
\norm{L_\infty}{S_i} 
&= \norm{L_\infty}{h_i^*(x) - h_i(x)} 
\leq C \cdot \varepsilon_i,
\end{align*}
where 
\begin{equation*}
\varepsilon_i\coloneqq \left(\norm{W_2^m(\Omega,\RR^d)}{f} + \lambda_i\right) \rho_{Z,\Omega}^{m - d\left(\frac12 -\frac1q\right)_+}\norm{\cH_k}{h_i}.
\end{equation*}
Setting $\bar\lambda\coloneqq \max_{1\leq i\leq d}\lambda_i$ and 
\begin{equation*}
\varepsilon 
\coloneqq \max_{1\leq i\leq d} \varepsilon_i
\leq \left(\norm{W_2^m(\Omega,\RR^d)}{f} + \bar \lambda\right) \rho_{Z,\Omega}^{m - d\left(\frac12 -\frac1q\right)_+}\max_{1\leq i\leq 
d}\norm{\cH_k}{h_i},
\end{equation*}
we get the bound
\begin{align*}
\norm{L_\infty(\Omega)}{U_{ij}}
&= \norm{L_\infty(\Omega)}{\lambda_i \varphi_i S_j+ \left(\lambda_i S_i + D_i (S_i)\right)\left(\varphi_j+ S_j\right)}\\
&= \norm{L_\infty(\Omega)}{\lambda_i \varphi_i S_j+ \lambda_i S_i\varphi_j + \lambda_i S_iS_j + D_i (S_i)\varphi_j + D_i (S_i)S_j}\\
&\leq \lambda_i\norm{L_\infty}{\varphi_i}C \varepsilon_j 
+ |\lambda_i\norm{L_\infty}{\varphi_j}C\varepsilon_i + \lambda_iC^2 \varepsilon_i\varepsilon_j + C\varepsilon_i 
\norm{L_\infty}{\varphi_j} + C^2 \varepsilon_i\varepsilon_j\\
&\leq C \varepsilon\cdot \left(2 \bar\lambda\max_{1\leq i\leq d}\norm{L_\infty(\Omega)}{\varphi_i} + C\bar\lambda \varepsilon + \max_{1\leq i\leq 
d}\norm{L_\infty(\Omega)}{\varphi_i} + C\varepsilon\right)\\
&\leq C \varepsilon\cdot \left((2 \bar\lambda+1) \max_{1\leq i\leq d}\norm{L_\infty(\Omega)}{\varphi_i} + C(\bar\lambda+1) \varepsilon\right)\\
&\leq 2 C \varepsilon (2 \bar\lambda + 1)\max_{1\leq i\leq d}\norm{L_\infty(\Omega)}{\varphi_i},
\end{align*}
where the last step assumes that the fill distance $\rho_{Z,\Omega} < \rho_0$ is sufficiently small and thus $\varepsilon$ is also small enough.
Inserting this bound in~\eqref{eq:tmp_V_bound} proves the first inequality in~\eqref{eq:bound_v_prime}.

Proceeding similarly and using~\eqref{eq:phi_star_phi_prime_star}, we may write
\begin{equation*}
V(x) = \sum_{i,j=1}^d P_{ij} \varphi_i(x)^*\varphi_j(x)^* = V(x) + \sum_{i,j=1}^d P_{ij} W_{ij}(x),
\end{equation*}
where now
\begin{equation*}
W_{ij}(x) \coloneqq S_i(x) \varphi_j(x)+\varphi_i(x) S_j(x) +S_i(x) S_j(x),
\end{equation*}
and thus again we get
\begin{equation*}
\norm{L_\infty(\Omega)}{V - V^*}
\leq \sup_{x\in \Omega} \left|\inner{F}{P, W(x)}\right|
\leq d\ \norm{F}{P} \max_{i,j=1,\dots, d} \norm{L_\infty(\Omega)}{W_{ij}}.
\end{equation*}
Now $W_{ij}$ can be bounded as $U_{ij}$, obtaining  
\begin{align*}
\norm{L_\infty(\Omega)}{W_{ij}}
&= \norm{L_\infty(\Omega)}{\varphi_i S_j+ \varphi_i S_j + S_i S_j}
\leq C \varepsilon\left(2 \max\limits_{1\leq i\leq d}\norm{L_\infty(\Omega)}{\varphi_i} + C \varepsilon\right)\\
&\leq 3 C \varepsilon  \max\limits_{1\leq i\leq d}\norm{L_\infty(\Omega)}{\varphi_i},
\end{align*}
again by assuming that $\varepsilon$ is small enough. This concludes the proof.
\end{proof}

\begin{remark}[Explicit bounds on the constants]\label{rem:explicit_bounds}
Some constants appearing in the error bound can be further estimated in terms of the system parameters. 

For the matrix $P$, which solves the Lyapunov equation $\Lambda^\top P + P\Lambda = -I$, the integral representation $P = \int_0^\infty e^{\Lambda^\top t} e^{\Lambda t} \, dt$ yields the bound
\begin{equation}\label{eq:P_bound}
\|P\|_F \leq \frac{M^2}{2\alpha},
\end{equation}
where $\alpha := \min_{1\leq i\leq d} |\mathrm{Re}(\lambda_i)| > 0$ is the spectral abscissa and $M > 0$ is the constant in the matrix exponential bound $\|e^{\Lambda t}\| \leq M e^{-\alpha t}$.

For the eigenfunction norms, the path-integral formula~\eqref{eq:path_integral} gives
\[
\|\varphi_i\|_{L^\infty(\Omega)} = \sup_{x \in \Omega} \left| w_i^\top x + \int_0^\infty e^{-\lambda_i t} w_i^\top G(s_t(x)) \, dt \right|,
\]
which can be computed numerically by simulating trajectories and applying quadrature. 
Except for the term $\max_{1\leq i\leq d}\|h_i\|_{\cH_k}$, the constants $C_\lambda$ and $C_\varphi$ in the main estimate are thus bounded by quantities depending only on the linearization spectrum (through $\Lambda$), the nonlinearity (through $G$), and the domain geometry (through $\Omega$).
\end{remark}

\section{Construction of certified Lyapunov functions}\label{sec:certification}
For this section we additionally assume that $f\in C^2$.
The CPA method attempts to parameterize a continuous, piecewise affine Lyapunov function for a nonlinear system.  The essential idea is to enforce $V(0)=0$, $V(x_i)\ge a\|x_i\|_2$, and $\nabla V(x_i)f(x_i) \le -b\|x\|_2$ for some $a,b>0$, at the vertices at a given triangulation and then interpolate $V$, using the values at the vertices, 
over the simplices of the triangulation.  Indeed, this was done in a preliminary version of the CPA method in \cite{JGD1999cpajulian}. However, if we merely know that $\nabla V(x_i)f(x_i) \le -b\|x_i\|_2$ at the vertices $x_i$, then we might have $\nabla V(x)f(x) > 0$ for some $x$ in the interior of some simplex and $V$ is not a Lyapunov function.
In \cite{JGD1999cpajulian} it was checked a\,posteriori if the interpolated function truly was a Lyapunov function for the system at hand. 
In \cite{Mar2002cpa} this problem was solved in a constructive manner by adding the term $E_{\nu,i}\|\nabla V_\nu\|_1$ to the left-hand-side of $\nabla V(x_i)f(x_i) \le -b\|x\|_2$, where $E_{\nu,i}$ is positive constant that depends on the system and the simplex being checked, and of which $x_i$ is a vertex.  That is, the 
constraints to assert the negativity of the orbital derivative $\nabla V(x)\cdot f(x)$ on the simplex $\fS_\nu$ are:
\begin{equation}
\nabla V(x_i)\cdot f(x_i) + \|\nabla V_\nu\|_1 E_{\nu,i} \le -b\|x_i\|_2
\end{equation}
for every vertex $x_i$, $i=0,1,\ldots,d$, of the simplex $\fS_\nu$.
Note that $\nabla V_\nu$ is the  constant gradient of $V$ on the simplex $\fS_\nu$ and $\|\nabla V_\nu\|_1 \le C$ can be modelled through constraints that are linear in the values $V(x_i)$.
   The constants $E_{\nu,i}$, $\nu=1,2,\ldots,n$, $i=0,1,2,\ldots,d$, are defined as
   \begin{equation}
   E_{\nu,i} := \frac{1}{2}\sum_{r,s=1}^d B^\nu_{rs}|[x_i^\nu-x_0^\nu]_r|\big(|[x_i^\nu-x_0^\nu]_s|+ |[x_i^\nu-x_0^\nu]_d| \big), 
   \end{equation}
where $[x]_j$ is the $j$th component of the vector $x$ and 
\begin{equation}
B_{rs}^\nu\ge \max_{\substack{j=1,2,\ldots,d \\ x\in \fS_\nu}}\left|\frac{\partial^2f_j}{\partial x_r \partial x_s}(x)\right|.
\end{equation}
Let us recapitulate the CPA for completeness:  Given is the system~\eqref{odesys}, where we assume $\sigma\geq 2$, and a triangulation $\cT=(\fS_\nu)_{\nu=1,2,\ldots,d}$ of a compact neighbourhood $\Omega\subset \R^d$ of the origin.
The triangulation consist of $d$-simplices $\fS_\nu  =   \co\{x_0^\nu, \dots, x_d^\nu\} $, such that two different simplices intersect in a common face or not at all. We denote the set of all vertices
of the simplices in $\cT$ by $\cV$, and  we demand that the origin is in $\cV$, i.e.~$0\in\cV$.
Given values $V_{x}\in\R$, $x\in\cV$, we can define a continuous, piecewise affine function $V:\Omega\to \R$ by defining on each  $\fS_\nu  =   \co\{x_0^\nu, \dots, x_n^\nu\} $
\begin{equation}
\label{Vinterp}
V(x)=\sum_{i=0}^d\lambda_i(x)V_{x_i^\nu},
\end{equation}
 where the $\lambda_i:\fS_\nu\to [0,1]$ are the normalized barycentric coordinates of $x$ in $\fS_\nu$; that is $x=\sum_{i=0}^d\lambda_i(x)x_i^\nu$ and $\sum_{i=0}^d\lambda_i(x)=1$.

 If  the values $V_{x}\in\R$, $x\in\cV$, additionally fulfill, for some constants $a,b>0$, the constraints
 \begin{equation}
 \label{LC1}
 V_0=0\ \ \text{and}\ \ V_x\ge a\|x\|_2\ \ \text{for all}\  x\in\cV
 \end{equation}
 and, for every simplex $\fS_\nu  =   \co\{x_0^\nu, x_1^\nu,\dots, x_d^\nu\} $ and every $i=0,1,\ldots,d$,
 \begin{equation}
 \label{LC2}
 \nabla V(x_i^\nu)\cdot f(x_i^\nu) + \|\nabla V_\nu\|_1 E_{\nu,i} \le -b\|x_i^\nu\|_2
 \end{equation}
 then $V$ defined through \eqref{Vinterp} is a Lyapunov function for the system \eqref{odesys} that fulfills $V(0)=0$, $V(x)\ge a\|x\|_2$ for all $x\in \Omega$, and
 \begin{equation}
 \limsup_{h\to 0+} \frac{V(x+hf(x))-V(x)}{h} \le -b\|x\|_2
 \end{equation} 
for all $x\in\Omega^\circ$.  Hence, the origin is exponentially stable and every compact sublevel-set $\{x\in \Omega\colon V(x)\le c\}$, $c>0$, is in its basin of attraction.

Now, the constraints  \eqref{LC1} and \eqref{LC2} can be stated as the constraints of a linear programming  problem with variables $V_x$, $x\in\cV$, together with some auxiliary variables to model $\|\nabla V_\nu\|_1$.   A feasible solution to this problem allows one to parameterize a Lyapunov function $V$ for the system on $\Omega$ as in  \eqref{Vinterp}.
Another possibility, and the one we will follow here, is to assign values to the variables $V_x= W(x)$, $x\in \cV$, where $W$ is the function from the Koopman regression in Section \ref{sec:certification}, and then we verify the constraints \eqref{LC1} and \eqref{LC2} for these values. Note that since 
$\Omega$ is compact this boils down to verifying 
 \begin{equation}
 \label{LC1x}
 V_0=0\ \ \text{and}\ \ V_x>0\ \ \text{for all}\  x\in\cV
 \end{equation}
 and, for every simplex $\fS_\nu  =   \co\{x_0^\nu, x_1^\nu,\dots, x_d^\nu\} $ and every $i=0,1,\ldots,d$,
 \begin{equation}
 \label{LC2x}
 \nabla V(x_i^\nu)\cdot f(x_i^\nu) + \|\nabla V_\nu\|_1 E_{\nu,i} <0,
 \end{equation}
with the exception that for $x_i^\nu=0$ nothing must be checked if $x_0^\nu=0$ for every simplex $\fS_\nu$ such that $0\in\fS_\nu$ as \eqref{LC2} is trivially fulfilled with both sides equal to zero.
This verification is orders of magnitude faster than solving the linear programming problem.  A similar approach has been followed for Lyapunov functions constructed using generalized interpolation in RKHS \cite{GiHa2015combi} or the Kurzweil \cite{kurzweil1963} / Massera \cite{Massera1956} integral formula, see e.g.~\cite{HaVa2019intLya}.

\section{Examples}

\subsection{Example 1}
Consider the following nonlinear dynamical system:

\begin{equation}\label{eq:ex_one}
\dot x = 
 \left[ \begin{array}{c}
 -2x_1
 \\  -3 (x_2-x_1^2)
\end{array}\right].
\end{equation}

The true eigenfunctions and eigenvalues of the Koopman operator are

\begin{gather*}
\varphi_{\lambda_1}(x)= x_1,\quad  \lambda_1 = -2 \text{ and}\\
\varphi_{\lambda_2}(x)= x_2 +3x_1^2,\quad \lambda_2 = -3.
\end{gather*}
We first learn the eigenfunctions $\varphi_{\lambda_1}$ and $\varphi_{\lambda_2}$ across 3600 collocation points in the square grid $[-5,5] \times [-5,5]$ using the kernel collocation procedure described in Section~\ref{sec:kernel_approx} with the Gaussian kernel

\begin{equation}
    K(x,x') = \exp \Big(-\frac{\|x-x'\|^2}{2\sigma^2}\Big),
\end{equation}
where $\sigma = 3$. We then construct the Lyapunov function $V(x)$ following the methodology of Section~5. Figure~\ref{V} shows that our $V^*$ recovered with kernels accurately approximates the true $V$ over this larger domain.

For the CPA certification below, we restrict to the smaller domain $[-2,2]^2$ with a finer grid, demonstrating that a Lyapunov function learned on a coarser grid over a larger domain can be successfully certified on a refined mesh---this illustrates the generalization capability of the kernel-based approach.

Finally, we compute $\dot{V}$ using the chain rule formula from Section~5 and compare our results with the true $\dot{V}^*$ in Figure~\ref{V_dot}. Once again, $\dot{V}^*$ matches the true $\dot{V}$ in the graphs.

\begin{figure}[tbh]
\centering
\subfloat{\label{V_approx}
\includegraphics[width=.45\textwidth]{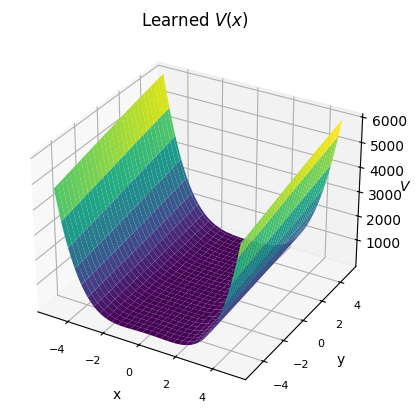}
}
\hfill
\subfloat{\label{V_true}
\includegraphics[width=.45\textwidth]{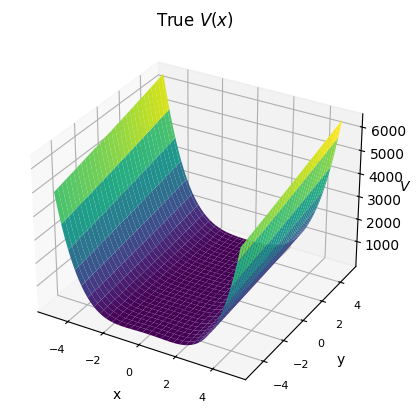}
}
\hfill
 \caption{Example \eqref{eq:ex_one}: approximated ${V}^*$ (left) and true ${V}$ (right), using 3600 collocation points}
\label{V}
\end{figure}
 
\begin{figure}[tbh]
\centering
\subfloat{\label{V_dot_approx}
\includegraphics[width=.45\textwidth]{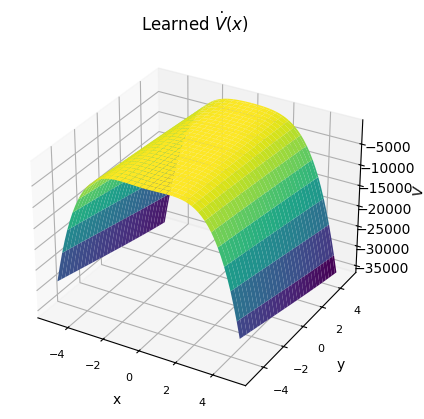}
}
\hfill
\subfloat{\label{V_dot_true}
\includegraphics[width=.45\textwidth]{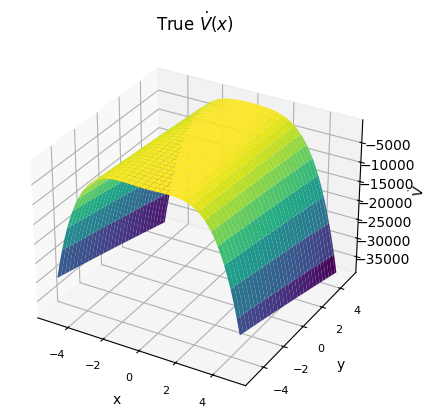}
}
\hfill
 \caption{Example \eqref{eq:ex_one}: approximated $\dot{V}^*$ (left) and true $\dot{V}$ (right), using 3600 collocation points}
\label{V_dot}
\end{figure}

\subsubsection*{Certification}
 
We used the Koopman regression to compute a Lyapunov function for the system, evaluating it on a regular grid of $109 \times 109=11{,}881$ points on $[-2,2]^2$.

For the CPA certification we used a regular triangulation of the same area consisting of $108\times 108\times 2=23{,}328$ congruent triangles.  It is easy to see that we can set $B^\nu_{11}=6$ and $B^\nu_{12}=B^\nu_{21}=B^\nu_{22}=0$.
The results can be seen in Figure~\ref{fig:Lya}.  The orbital derivative is negative with the exception of a very small area close to the origin (red dots in the figure).  This is to be expected unless special care is taken for the triangulation at the equilibrium, see \cite{Giesl2010,GiHa2012excpandim}.
A more efficient method to assert stability and obtain a large inner approximation of the basin of attraction is to use a local quadratic Lyapunov function for the linearization of the system close to the equilibrium to assert local stability and obtain a small ball $B$ around the equilibrium that is guaranteed to be in the basin of attraction and then to use CPA certification to obtain a larger estimate.  If the orbital derivative is certified to be negative on $\{x\in\R^n\colon V(x)\le c\}\setminus B$, where $\{x\in\R^n\colon V(x)\le c\}$ is a compact subset of $\Omega$, then every solution starting in the sublevel set will end up in $B$ and thus converge to the equilibrium.
     
\begin{figure}[H]
\centering
\includegraphics[width=0.5\textwidth]{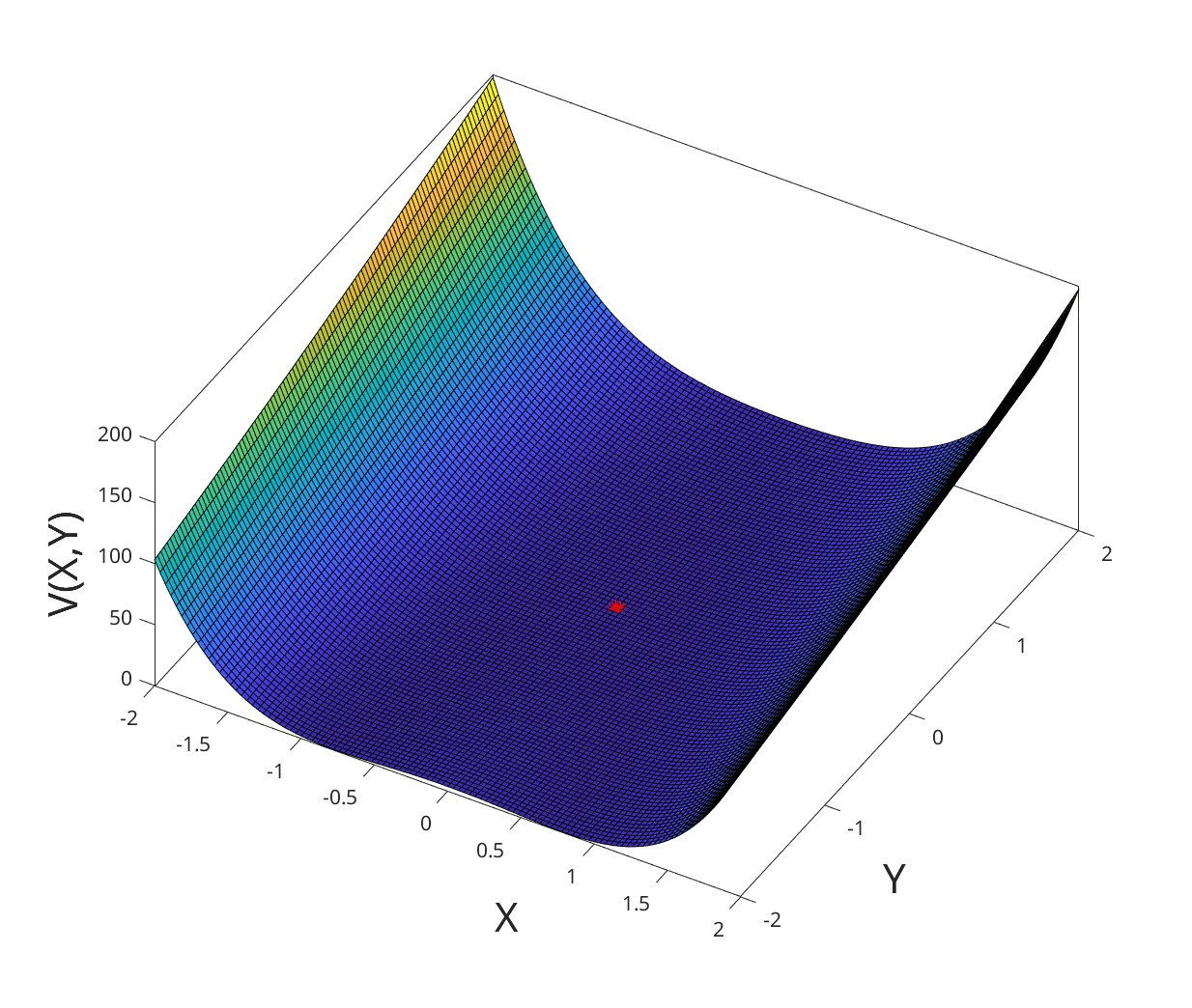}
\caption{Lyapunov function for the system~\eqref{eq:ex_one} computed by Koopman regression and certified with the method in Section~\ref{sec:certification}.  The orbital derivative is negative with exception of the red dots close to the origin.}
\label{fig:Lya}
\end{figure}

\subsection{Example 2}

Consider the Duffing oscillator with damping:
\begin{equation}
\begin{aligned}
\dot{x}_1 &= x_2, \\
\dot{x}_2 &= -\delta x_2 - \alpha x_1 - \beta x_1^3,
\end{aligned}
\label{eq:duffing}
\end{equation}
where $\delta > 0$ is the damping coefficient, and $\alpha, \beta > 0$ define the stiffness and nonlinearity, respectively.

The linearization at the origin has Jacobian
\begin{equation*}
E = \begin{pmatrix} 0 & 1 \\ -\alpha & -\delta \end{pmatrix},
\end{equation*}
with characteristic polynomial $\lambda^2 + \delta\lambda + \alpha = 0$ and eigenvalues
\begin{equation*}
\lambda = \frac{-\delta \pm \sqrt{\delta^2 - 4\alpha}}{2}.
\end{equation*}
For $\delta > 0$ and $\alpha > 0$, both eigenvalues have negative real parts: either two negative real eigenvalues when $\delta^2 \geq 4\alpha$ (overdamped case), or complex conjugate eigenvalues with negative real part $-\delta/2$ when $\delta^2 < 4\alpha$ (underdamped case). Thus, the origin is an asymptotically stable equilibrium.

Define the following Lyapunov function, corresponding to the total mechanical energy of the system:
\begin{equation}
V(x) = \frac{1}{2}x_2^2 + \frac{1}{2}\alpha x_1^2 + \frac{1}{4}\beta x_1^4.
\label{eq:lyapunov}
\end{equation}

This function is:
\begin{itemize}
    \item Positive definite: $V(x) > 0$ for all $x \ne 0$, and $V(0) = 0$.
    \item Radially unbounded: $V(x) \to \infty$ as $\|x\| \to \infty$.
\end{itemize}

The derivative of $V$ along system trajectories is:
\begin{equation}
\begin{aligned}
\dot{V}(x) &= \frac{\partial V}{\partial x_1} \dot{x}_1 + \frac{\partial V}{\partial x_2} \dot{x}_2 \\
&= (\alpha x_1 + \beta x_1^3)x_2 + x_2(-\delta x_2 - \alpha x_1 - \beta x_1^3) \\
&= -\delta x_2^2.
\end{aligned}
\end{equation}

This shows that $\dot{V}(x) \le 0$ for all $x$, with equality only when $x_2 = 0$. By Lyapunov's direct method and LaSalle's invariance principle:
\begin{enumerate}
    \item[(i)] The origin is a globally asymptotically stable equilibrium point.
    \item[(ii)] Trajectories converge to the largest invariant set in $\{x : x_2 = 0\}$, which is the origin since $\alpha, \beta > 0$.
\end{enumerate}
Note, however, that this function $V$ is not a {\it strict} Lyapunov function. 

Using the method proposed in this paper, we now seek to compute a strict Lyapunov function. We use the same Gaussian kernel with parameter $\sigma=3$ and 3600 points in the same domain $[-5,5]^2$ as Example 1 to construct the Koopman eigenfunctions and then the Lyapunov functions.
We still plot the graphs of the (nonstrict) analytical Lyapunov function above next to the computed Lyapunov function $V^*$, but in this case $V^*$ is not approximating $V$ and thus the functions are different. In contrast to $V$, The computed Lyapunov function $V^*$ is a strict Lyapunov function, satistying 
positivity and negativity respectively on the domain as well as convexity.

\begin{figure}[tbh]
\centering
\subfloat{\label{V_approx_duffing}
\includegraphics[width=.45\textwidth]{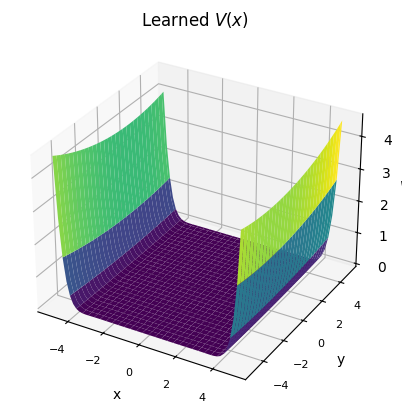}
}
\hfill
\subfloat{\label{V_true_duffing}
\includegraphics[width=.45\textwidth]{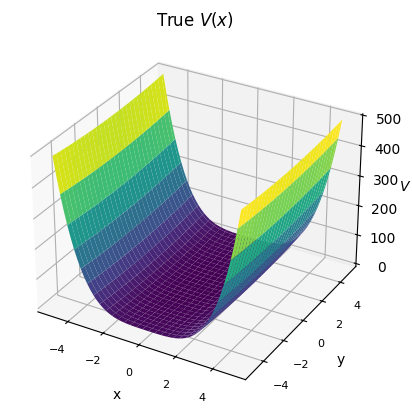}
}
\hfill
 \caption{Example \eqref{eq:duffing}: approximated ${V}^*$ (left) and true ${V}$ (right), using 3600 collocation points  -- note that $V^*$ is not approximating $V$, in fact, $V^*$ is a strict Lyapunov function while $V$ is not}
\label{V2}
\end{figure}

\begin{figure}[tbh]
\centering
\subfloat{\label{V_dot_approx_duffing}
\includegraphics[width=.45\textwidth]{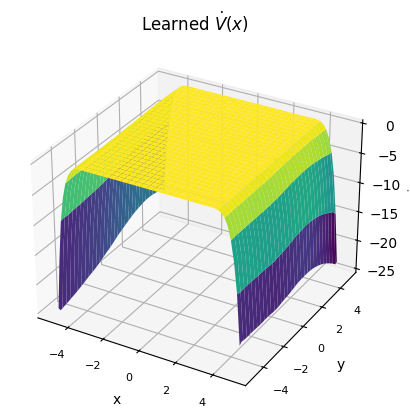}
}
\hfill
\subfloat{\label{V_dot_true_duffing}
\includegraphics[width=.45\textwidth]{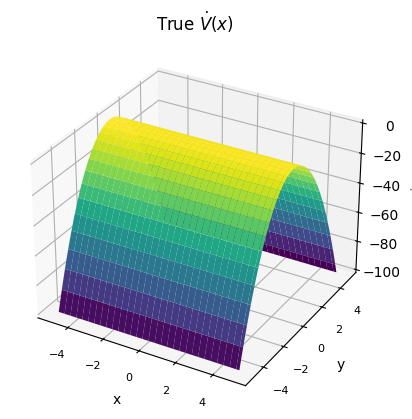}
}
\hfill
 \caption{{Example \eqref{eq:duffing}: approximated $\dot{V}^*$ (left) and true $\dot{V}$ (right), using 3600 collocation points -- note that $V^*$ is not approximating $V$, in fact, $V^*$ is a strict Lyapunov function while $V$ is not}}
\label{V_dot2}
\end{figure}

\section{Conclusion}

We have presented a kernel-based methodology for constructing Lyapunov 
functions using approximate Koopman eigenfunctions. Our approach leverages 
the decomposition of principal eigenfunctions into linear and nonlinear 
components, where the nonlinear part is computed via symmetric kernel 
collocation in RKHS. The resulting Lyapunov function, constructed as a 
quadratic form in the eigenfunctions, inherits rigorous error bounds 
(Theorem 4) that depend on the fill distance of collocation points.

The CPA certification procedure provides a mechanism to verify that the 
computed function is indeed a valid Lyapunov function outside the training 
points, addressing a key practical concern in data-driven stability analysis.

Among the limitations of the current framework is that it is restricted to systems with real 
Koopman eigenvalues. Extension to complex eigenvalues, which arise in 
oscillatory systems, requires considering conjugate pairs and is deferred 
to future work. Additionally, the method requires knowledge of the vector 
field $f(x)$; a fully data-driven extension using trajectory measurements 
is an important direction.

 Several extensions are under investigation: i.) Learning optimal kernels to maximize the certified domain of attraction, ii.)
Purely data-driven approaches that learn local Lyapunov functions 
   without explicit knowledge of $f$, iii.) Improved verification tools leveraging interval arithmetic or 
   sum-of-squares certificates

The combination of Koopman spectral theory and kernel methods provides a 
promising framework for stability analysis that balances theoretical 
guarantees with computational tractability.

\section*{Acknowledgement}
GS is a member of INdAM-GNCS, and his work was partially supported by the project ``Data-driven discovery and control of multi-scale interacting artificial agent system'' funded by the program Next-GenerationEU - National Recovery and Resilience Plan (NRRP) -- CUP H53D23008920001.
HO and JL acknowledge support from the DoD Vannevar Bush Faculty Fellowship Program under ONR award number N00014-18-1-2363 and the Air Force Office of Scientific Research under MURI award number FOA-AFRL-AFOSR-2023-0004 (Mathematics of Digital Twins). UV will acknowledge support from NSF CMMI grant 2031573.

\bibliographystyle{abbrv}
\bibliography{bibliography}

\end{document}